\newtheorem{theorem}{Theorem}[section]
\newtheorem{corollary}[theorem]{Corollary}
\newtheorem{lemma}[theorem]{Lemma}
\theoremstyle{definition}
\newtheorem{definition}[theorem]{Definition}
\newtheorem{claim}[theorem]{Claim}
\newcommand{\R}{\mathds{R}}
\newcommand{\rr}{\mathds{R}}
\newcommand{\zz}{\mathds{Z}}
\DeclareMathOperator{\Conv}{conv}
\title[Partitions of mass assignments with spheres and wedges]{Partitions of mass assignments with spheres and wedges}
\author[Lessure]{Deron Lessure}\address{University of Michigan, Ann Arbor, MI 48109} 
\email{dlessure@umich.edu}
\author[Sober\'on]{Pablo Sober\'on}\address{Baruch College \& The Graduate Center, City University of New York, New York, NY 10010} 
\email{psoberon@gc.cuny.edu}
\subjclass{52C35, 52A37, 28A75}
\keywords{Mass partitions, Hyperplane arrangements, Necklace Splitting, Ham Sandwich theorem}
\thanks{The research of P. Sober\'on was supported by NSF CAREER award no. 2237324, NSF award no. 2054419 and a PSC-CUNY Trad B award.}
\begin{document}

\begin{abstract}
In this paper, we generalize classic mass partition results dealing with partitions using spheres, parallel hyperplanes, or axis-parallel wedges to the setting of mass assignments.  In a mass assignment problem, we assign mass distributions continuously to all $k$-dimensional subspaces of $\mathbb{R}^d$, and seek to guarantee the existence of a particular subspace in which more masses can be bisected than those by analyzing the problem in $\rr^k$. We prove new mass assignment results for spheres, parallel hyperplanes, and axis-parallel wedges.  The proof techniques rely on new Borsuk--Ulam type theorems on spheres and Stiefel manifolds.
\end{abstract}

\maketitle

\section{Introduction}

The ham sandwich problem is a classic result in topological combinatorics, attributed to Banach by Seinhaus \cite{Steinhaus1938}.  It says that \textit{for any $d$ mass distributions on $\rr^d$, there exists a hyperplane simultaneously bisecting all of them.}  A mass distribution $\mu$ is a probability measure absolutely continuous with respect to the Lebesgue measure, and a bisecting hyperplane is a hyperplane so that the two closed half-spaces it induces have the same $\mu$-measure.  The study of equitable partitions of mass distributions in Euclidean spaces exhibits the connections between combinatorics, topology, and mathematical economics \cites{RoldanPensado2022, Zivaljevic2017}.

In recent years, many existing mass partition results have been extended to the setting of mass assignments.  Instead of working with mass distributions on $\rr^k$, we continuously assign mass distributions to some $k$-dimensional subspaces of $\rr^d$.  The goal is to guarantee the existence of a particular $k$-dimensional subspace of $\rr^d$ in which we can find an equitable partition of the assigned mass distributions.  The freedom to choose the $k$-dimensional subspace often translates to improved results over the $k$-dimensional versions of mass partitions results (i.e., we are able to evenly split more masses simultaneously).  Consider the ham sandwich theorem for mass assignments, proven by Schnider \cite{Schnider:2020kk}.  It says that \textit{given $d$ mass assignments on the $k$-dimensional subspaces of $\rr^d$, there exists a $k$-dimensional subspace $L$ and a hyperplane $H$ of $L$ that simultaneously bisects all $d$ mass distributions assigned to $L$.}  Even though a direct application of the ham sandwich theorem would guarantee we can simultaneously bisect $k$ of these mass distributions, the additional freedom to choose $L$ allows us to bisect even more.  One can impose additional conditions on $L$, such as fixing $k-1$ of its directions in advance \cite{AxelrodFreed2022}.

In this paper, we prove mass assignment versions of classic mass partition results that use different shapes to split subspaces.  For instance, the first application of the polynomial ham sandwich theorem of Stone and Tukey is the following folklore result.

\begin{theorem}[Ham sandwich for spheres \cite{Stone:1942hu}]\label{thm:StoneTukey-old}
    Given $d+1$ mass distributions on $\rr^d$, there exists a sphere that contains exactly half of each mass distribution.
\end{theorem}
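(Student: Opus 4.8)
The plan is to reduce the statement to the classical ham sandwich theorem in one higher dimension via the standard lifting to a paraboloid. Consider the map $\phi \colon \rr^d \to \rr^{d+1}$ given by $\phi(x) = (x, \|x\|^2)$, whose image is the paraboloid $P = \{(y,t) \in \rr^d \times \rr : t = \|y\|^2\}$. For each mass distribution $\mu_i$ on $\rr^d$, let $\nu_i = \phi_\ast \mu_i$ be the pushforward, a Borel probability measure supported on $P \subset \rr^{d+1}$.

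First I would observe that every affine hyperplane $H \subset \rr^{d+1}$ is $\nu_i$-null: writing $H = \{(y,t): \langle a,y\rangle + b t = c\}$, the preimage $\phi^{-1}(H) = \{x \in \rr^d : b\|x\|^2 + \langle a,x\rangle = c\}$ is either all of $\rr^d$, the empty set, or a sphere or hyperplane of $\rr^d$; in every nondegenerate case it is Lebesgue-null, hence $\mu_i$-null. So the measures $\nu_i$ behave, with respect to hyperplane cuts, exactly like absolutely continuous measures, and applying the appropriate form of the ham sandwich theorem to $\nu_1, \dots, \nu_{d+1}$ in $\rr^{d+1}$ yields a hyperplane $H$ whose two closed half-spaces each carry $\nu_i$-mass $\tfrac12$ for every $i$.

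Next I would pull $H$ back. With $H = \{(y,t): \langle a,y\rangle + b t = c\}$, its two closed half-spaces correspond under $\phi$ to $\{x : b\|x\|^2 + \langle a, x\rangle \le c\}$ and $\{x : b\|x\|^2 + \langle a,x\rangle \ge c\}$. If $b \neq 0$, completing the square shows this is precisely the partition of $\rr^d$ into the inside and the outside of the sphere $S = \{x : b\|x\|^2 + \langle a,x\rangle = c\}$, so $S$ contains exactly half of each $\mu_i$, as claimed. The case $b = 0$ produces a hyperplane of $\rr^d$ rather than a sphere; I would dispose of it by a perturbation argument — e.g. adding an auxiliary $(d+2)$nd constraint, or perturbing the $\mu_i$, so as to force $b \neq 0$, and then passing to a limit of bisecting spheres in a suitable compactification of the space of spheres-and-hyperplanes (a bisecting hyperplane being itself an admissible degenerate "sphere of infinite radius").

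The main obstacle is the second step: one must either invoke a version of the ham sandwich theorem valid for finite Borel measures that merely vanish on hyperplanes (which follows from the standard absolutely continuous statement by convolving each $\nu_i$ with a shrinking mollifier and taking a subsequential limit of bisecting hyperplanes, using compactness of the relevant parameter space), or else set up the lifting so that honest absolutely continuous measures appear directly. A secondary but genuine nuisance is the degenerate case $b=0$, together with guaranteeing that the level set $S$ is a nonempty, nondegenerate sphere rather than a point or the empty set; both are handled by the same perturbation-and-limit device, which is also why the clean statement tacitly permits hyperplanes as degenerate spheres.
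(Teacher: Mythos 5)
Your proposal is correct and is exactly the argument the paper has in mind: it states that the proof of this theorem ``boils down to a lifting argument and an application of the ham sandwich theorem in a higher dimension,'' which is your paraboloid lift $x\mapsto(x,\|x\|^2)$ plus the ham sandwich theorem for measures vanishing on hyperplanes. Two small simplifications: the degenerate cases (empty sphere or a single point) are ruled out automatically because each closed region must carry $\mu_i$-mass $\tfrac12>0$, and the $b=0$ case needs no perturbation at all since the statement explicitly admits hyperplanes as degenerate spheres.
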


For the result above, we consider half-spaces as degenerate spheres of infinite radius.  Our first result is a mass assignment version of the Stone--Tukey sphere partition.

\begin{theorem}\label{thm:spheres-in-mass-assignments}
    Let $1 \le k \le d$ be integers.  Given $d+1$ mass assignments on the $k$-dimensional affine spaces of $\rr^d$, there exist a $k$-dimensional space $L$ of $\rr^d$ and a sphere on $L$ that contains exactly half of each mass distribution assigned to $L$.  Moreover, we can fix $k-1$ directions of $L$ in advance.
\end{theorem}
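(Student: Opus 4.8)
The plan is to deduce \cref{thm:spheres-in-mass-assignments} from the mass assignment ham sandwich theorem with prescribed directions (the refinement of Schnider's theorem in which $k-1$ directions of $L$ are fixed in advance, \cite{AxelrodFreed2022}) by lifting to a paraboloid, exactly as \cref{thm:StoneTukey-old} is deduced from the ordinary ham sandwich theorem, but now carried out continuously over the whole family of $k$-flats. Write $\rr^{d+1}=\rr^d\times\rr$ with last coordinate $t$, let $V_0\subset\rr^d$ be the $(k-1)$-dimensional subspace whose directions we want $L$ to contain, and set $V_1=V_0\oplus\rr e_{d+1}$, a $k$-dimensional subspace of $\rr^{d+1}$.

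For a $k$-flat $A$ of $\rr^d$, let $c_A$ be the orthogonal projection of the origin onto $A$, put $q_A(a)=|a-c_A|^2$, and let $\lambda_A\colon A\to A\times\rr$ be the lift $a\mapsto (a,q_A(a))$ onto the paraboloid over $A$. Any $(k+1)$-flat $B$ of $\rr^{d+1}$ whose direction contains $e_{d+1}$ is of the form $B=A\times\rr$ for a unique $k$-flat $A$ of $\rr^d$; given the $d+1$ mass assignments $\mu_1,\dots,\mu_{d+1}$ of the hypothesis, define $\widehat\mu_i^{\,B}:=(\lambda_A)_*\mu_i^A$. This prescription is canonical — no choice of frame on $A$ is needed, since $q_A$ is a squared distance — so $\widehat\mu_i$ depends continuously on $B$; each $\widehat\mu_i^{\,B}$ is a probability measure supported on the paraboloid $\lambda_A(A)$, and it charges no hyperplane of $B$, because the $\lambda_A$-preimage of a hyperplane of $B$ is a sphere of $A$ (a hyperplane of $A$ in the degenerate case when that hyperplane of $B$ contains $e_{d+1}$), which is Lebesgue-null and hence $\mu_i^A$-null. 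Moreover, if $H$ is a hyperplane of $B=A\times\rr$ with a chosen closed side $H^+$, then $\lambda_A^{-1}(H)$ is a sphere $\Sigma$ on $A$ — or a hyperplane of $A$ when $e_{d+1}$ is a direction of $H$, the degenerate sphere — and $\lambda_A^{-1}(H^+)$ is precisely the inside (or outside) of $\Sigma$; consequently $H$ bisects $\widehat\mu_i^{\,B}$ if and only if $\Sigma$ contains exactly half of $\mu_i^A$.

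Now apply the mass assignment ham sandwich theorem with prescribed directions to the $d+1$ mass assignments $\widehat\mu_1,\dots,\widehat\mu_{d+1}$ on the $(k+1)$-flats of $\rr^{d+1}$, prescribing the $k=(k+1)-1$ directions $V_1$. It produces a $(k+1)$-flat $B$ of $\rr^{d+1}$ with $\mathrm{dir}(B)\supseteq V_1$ and a hyperplane $H$ of $B$ that simultaneously bisects $\widehat\mu_1^{\,B},\dots,\widehat\mu_{d+1}^{\,B}$. Since $e_{d+1}\in\mathrm{dir}(B)$ we may write $B=L\times\rr$ with $L$ a $k$-flat of $\rr^d$, and since $V_0\subseteq\mathrm{dir}(B)$ we get $V_0\subseteq\mathrm{dir}(L)$, so $L$ has the prescribed $k-1$ directions. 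By the last sentence of the previous paragraph, the sphere $\Sigma=\lambda_L^{-1}(H)$ on $L$ contains exactly half of each $\mu_i^L$ (and $\Sigma$ is a halfspace, the degenerate sphere, precisely when $H$ contains the $t$-direction). This is the assertion of \cref{thm:spheres-in-mass-assignments}; when $k=d$ the construction simply recovers \cref{thm:StoneTukey-old} applied to $d+1$ masses in $\rr^{d}$.

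The conceptual reduction is routine; the points needing care are analytic, and these, rather than the geometry, are the main obstacle. First, the lifted measures $\widehat\mu_i^{\,B}$ are not absolutely continuous — they live on a paraboloid — so one needs the mass assignment ham sandwich theorem not just for mass distributions but for finite measures charging no hyperplane. This costs nothing in the topological proof: the test map $(B',H',H'^+)\mapsto\bigl(\widehat\mu_i^{\,B'}(H'^+)-\tfrac12\bigr)_{i}$ is continuous precisely under the no-hyperplane-charging hypothesis, so the Borsuk--Ulam argument goes through verbatim; alternatively one convolves $\widehat\mu_i^{\,B}$ with a shrinking ball, applies the absolutely continuous version, and passes to a limit, the only subtlety there being an a priori bound that keeps the flats $L^\varepsilon$ in a compact region. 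Second, we have used the theorem for mass assignments to \emph{affine} flats with prescribed directions; if one starts from the version for linear subspaces, one first promotes it to the affine setting by the standard homogenization (passing from affine $(k+1)$-flats of $\rr^{d+1}$ to linear $(k+2)$-subspaces of $\rr^{d+2}$, which leaves enough fixed directions available and places the solution off the subspaces at infinity).
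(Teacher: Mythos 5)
Your route is genuinely different from the paper's: you lift each $k$-flat $A$ to a paraboloid inside the $(k+1)$-flat $A\times\rr$ and then try to quote the prescribed-directions ham sandwich theorem for mass assignments in $\rr^{d+1}$ as a black box, whereas the paper lifts via an inversion into a sphere in $\rr^{d+1}$, parametrizes hyperplanes of the \emph{ambient} $\rr^{d+1}$ together with frames in $V_{d-k}(\rr^d)$, and proves a new Borsuk--Ulam statement (\cref{thm:weird-action-equivariant}) tailored to the resulting twisted $(\zz_2)^{d-k+1}$-action. Your geometry is sound and even avoids one of the paper's headaches: a hyperplane of $A\times\rr$ can never contain the paraboloid $\lambda_A(A)$, so the degeneracy the paper must exclude with the $\operatorname{dist}(w,U)$ factor and the coordinates $\langle w,(v_i,0)\rangle$ does not arise (the only degenerate case, when $e_{d+1}$ is a direction of $H$, gives the allowed degenerate sphere, and an empty or one-point intersection is incompatible with bisection).

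The gap is in the black-box step. Your $\widehat\mu_i$ is defined only on the $(k+1)$-flats $B$ of $\rr^{d+1}$ whose direction contains $e_{d+1}$; for any other $B$ there is no $k$-flat $A$ with $B=A\times\rr$, and no measure is assigned. But the theorem you invoke requires a mass assignment on \emph{all} $(k+1)$-flats: in the Stiefel-manifold proofs (including the one in this paper and the one in \cite{AxelrodFreed2022}), the prescribed directions are enforced by extra coordinates of the test map, and the test map evaluates the assignment at flats that do \emph{not} contain the prescribed directions. So the cited theorem does not apply to your $\widehat\mu_i$ as stated. To repair this you must either (a) continuously extend each $\widehat\mu_i$ from the closed sub-Grassmannian $\{B: e_{d+1}\in\operatorname{dir}(B)\}$ to the whole Grassmannian --- plausible via a Dugundji-type extension for sections of the bundle of measures, using convexity of the fibers, but this has to be stated and proved, and the extension must preserve the no-hyperplane-charging property --- or (b) prove a restricted-family ham sandwich theorem for assignments on the flats $L\times\rr$ with $L$ ranging over $k$-subspaces of $\rr^d$ and the bisecting hyperplane confined to $L\times\rr$; the latter is a genuinely new equivariant statement (the space of hyperplanes of $L\times\rr$ varies with $L$, so one is working with a bundle rather than a product), and proving such a statement is essentially what the paper's construction does. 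The secondary issues you flag (singular lifted measures, affine versus linear, the canonical choice of halving translate) are real but standard; the domain-of-definition issue is the one that breaks the proof as written.
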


We also show that the number of mass assignments, $d+1$, is optimal for any $k$.  An easy to visualize example of this result is the following corollary.

\begin{corollary}\label{cor:lines}
    Let $L_1, L_2, L_3, L_4$ be four families of lines in $\rr^3$, and such that no line in the families is vertical and no two lines are parallel.  Then, there exists a vertical circle such that for each family $L_i$, at least half the lines go through the circle (i.e., the circle goes around the line) and at least half the lines do not.
\end{corollary}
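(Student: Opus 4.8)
The plan is to deduce \cref{cor:lines} from \cref{thm:spheres-in-mass-assignments} applied with $d = 3$ and $k = 2$. Since we want a circle whose plane is vertical, we take $k = 2$ and use the freedom to prescribe $k - 1 = 1$ direction of $L$, fixing it to be the vertical direction $e_3$; then $L$ ranges over the affine vertical $2$-planes of $\rr^3$, and a ``sphere on $L$'' is exactly a vertical circle in $\rr^3$. It remains to manufacture $d + 1 = 4$ mass assignments $\mu_1, \dots, \mu_4$ on the $2$-dimensional affine subspaces of $\rr^3$, one per family, so that a circle $C \subset L$ bisecting $\mu_i|_L$ forces at least half of the lines of $L_i$ to thread through $C$ and at least half of them to miss it. For an affine plane $L$ and a non-vertical line $\ell$ transverse to $L$, write $p_\ell(L) = \ell \cap L$; then ``$\ell$ goes through a circle $C \subset L$'' means precisely that $p_\ell(L)$ lies in the open disk bounded by $C$. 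So the natural choice is to let $\mu_i|_L$ be the normalized sum, over $\ell \in L_i$, of the uniform probability measure on a small ball around $p_\ell(L)$ inside $L$: if $C$ bisects this measure then at least and at most half of the points $p_\ell(L)$ lie in the closed disk of $C$ (the lines whose point falls on $C$ itself may be assigned to either side), which is what we want. The hypothesis that no line is vertical is essential here --- a vertical line is parallel to every vertical plane and can never be counted --- while the assumption that no two lines are parallel is a harmless genericity condition.

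The one real obstacle is continuity of the assignment $L \mapsto \mu_i|_L$: as $L$ approaches a plane parallel to some $\ell \in L_i$ the point $p_\ell(L)$ escapes to infinity, and it even jumps from one end of $L$ to the other as $L$ crosses that direction, so the naive construction above is discontinuous. I would resolve this through the one-point compactification $L \cup \{\infty\} \cong S^2$. Recall that in the proof of \cref{thm:StoneTukey-old} the Stone--Tukey spheres of $L$ correspond, under inverse stereographic projection $L \hookrightarrow S^2 \subset \rr^3$, to the sections of $S^2$ by affine hyperplanes of $\rr^3$, with a point of $L$ lying inside a sphere exactly when its image lies on the corresponding side of the hyperplane. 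Performing the blob construction on $S^2$ instead of on $L$ --- a spherical cap of fixed small angular radius centered at the image of $p_\ell(L)$, which is the north pole precisely when $\ell \parallel L$ --- makes each $\mu_i|_L$ a genuine mass distribution depending continuously on $L$, because that image point passes continuously through the north pole as $L$ sweeps across a parallel direction. (Alternatively one may keep the blobs in $L$, bound their supports, and pass to a limit as the radius tends to $0$, using compactness of the relevant family of bisecting circles; the compactification is cleaner.) Verifying weak-$*$ continuity of the measures through the north pole, and confirming that the version of \cref{thm:spheres-in-mass-assignments} being invoked requires only this form of continuity, is the step I expect to demand the most care.

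Granting the construction, \cref{thm:spheres-in-mass-assignments} produces an affine vertical $2$-plane $L$ and a Stone--Tukey sphere $C$ on $L$ halving every $\mu_i|_L$. Unwinding the correspondence, for each $i$ the closed disk bounded by $C$ contains at least and at most half of the intersection points $p_\ell(L)$, $\ell \in L_i$ (the lines of $L_i$ parallel to $L$ contribute their mass near $\infty$, hence on the outside), which reads geometrically in $\rr^3$ as: at least half of the lines in $L_i$ go through the vertical circle $C$ and at least half do not. If $C$ happens to degenerate to a line in $L$ --- an infinite-radius sphere --- the same conclusion holds with ``inside the disk'' replaced by ``on the chosen side'', and one can also perturb the data slightly to exclude this case; either way \cref{cor:lines} follows.
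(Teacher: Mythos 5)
Your proposal is correct and follows the same high-level strategy as the paper — reduce to \cref{thm:spheres-in-mass-assignments} with $d=3$, $k=2$ and the vertical direction prescribed, then approximate the discrete line families by genuine mass assignments and pass to a limit by compactness — but the two arguments smooth the problem at different stages. The paper works entirely in line-space: it identifies $A_1(\rr^3)$ with a subset of $G_2(\rr^4)$ via the embedding $x\mapsto(x,1)$, declares a measure on $A_1(\rr^3)$ to be smooth if its lift to $G_2(\rr^4)$ is absolutely continuous w.r.t.\ Haar measure, and then observes that the induced mass assignment $\mu_H(A)=\mu(\{\ell:\ell\cap A\neq\emptyset\})$ is automatically well defined and continuous because the lines parallel to any given plane have $\mu$-measure zero. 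You instead smooth in point-space inside each plane, placing blobs around the intersection points $p_\ell(L)=\ell\cap L$, and recognize (rightly) that this is discontinuous at infinity; your one-point-compactification fix, replacing $L$ by $S^2\cong L\cup\{\infty\}$ and using spherical caps, achieves the same end. What the paper's Grassmannian reformulation buys is that the continuity of the resulting mass assignment is structural and does not require a separate verification at the ``escaping-to-infinity'' locus; what your construction buys is that the measures are visibly concentrated near the intersection points, making the geometric reading of ``bisecting circle'' $\Rightarrow$ ``balanced count of threaded lines'' more transparent in the small-radius limit. The step you flag as needing the most care --- confirming that \cref{thm:spheres-in-mass-assignments} applies to your $S^2$-valued assignments --- is legitimate but not a real obstruction: since no line in any $L_i$ is vertical, each $p_\ell(L)$ is an honest finite point of the vertical plane $L$, so the north pole carries no atom and the restriction of your measure to $L$ is absolutely continuous; the compactification is used only to prove continuity in $L$, after which one is back in the standard setting. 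Both routes then close by approximation and compactness (of $G_2(\rr^4)$ in the paper, of the family of bisecting circles and directions in yours), which is the same argument.
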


A vertical circle is simply a circle contained in a vertical plane.  In the theorem above, if a line intersects the circle, we count it as going through through the circle and not going through the circle.  It may be possible that with some general position conditions and asking each family of lines to have an even number of lines we can also ask for each line not to intersect the circle.

The techniques used to prove mass partition results and their mass assignments analogues are topological.  For mass assignments, these methods range from the explicit computation of characteristic classes of associated vector bundles \cites{Schnider:2020kk, Blagojevic2023, Camarena2024}, properties of the Fadell--Husseini index \cite{Blagojevic2023a}, or applications of Borsuk--Ulam-type theorems for Stiefel manifolds \cite{AxelrodFreed2022}.  The last approach, using Borsuk--Ulam-type theorems for Stiefel manifolds, requires much simpler topological machinery.

The proof of \cref{thm:StoneTukey-old} boils down to a lifting argument and an application of the ham sandwich theorem in a higher dimension.  To prove \cref{thm:spheres-in-mass-assignments}, we combine similar lifting arguments with the parametrizations used to prove the ham sandwich theorem for mass assignments.  This reduces the problem to proving the existence of zeros of certain $(\zz_2)^{k+1}$-equivariant continuous maps with domain $S^d \times V_k(\rr^d)$, where $S^d$ is a $d$-dimensional sphere and $V_k(\rr^d)$ is a Stiefel manifold.  Even though similar results have been used to prove mass partition results \cite{Soberon2023}, the actions of $(\zz_2)^{k+1}$ we require are different than those in previous instances.

With similar methods, we also prove a mass assignment version of the Takahashi--Sober\'on theorem for mass partitions with two parallel hyperplanes.

\begin{theorem}\label{thm:goalposts}
    Let $1 \le k \le d$ be integers.  Given any $d+1$ mass assignments on the $k$-dimensional linear spaces of $\rr^d$, there exists a $k$-dimensional space $L$ of $\rr^d$ and two parallel hyperplanes of $L$ such that the region between the two parallel hyperplanes contains exactly half of each of the $d+1$ mass distributions assigned to $L$.
\end{theorem}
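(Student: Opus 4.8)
The plan is to follow the template of the proof of \cref{thm:spheres-in-mass-assignments}. The idea is to combine a lifting argument that turns a region between two parallel hyperplanes into a half-space in higher dimension with the Stiefel-manifold parametrization of $k$-dimensional subspaces used in the ham sandwich theorem for mass assignments, and then to apply a Borsuk--Ulam-type theorem on $S^d\times V_k(\rr^d)$.

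First I would set up the lift. Fix a $k$-dimensional linear subspace $L$, identified with $\rr^k$ through an orthonormal frame $(u_1,\dots,u_k)\in V_k(\rr^d)$. A region between two parallel hyperplanes of $L$ with common normal $v$ and levels $a\le b$ is the sublevel set $\{x\in L : (\langle x,v\rangle-a)(\langle x,v\rangle-b)\le 0\}$ of a quadratic function, so after adjoining to $L$ appropriate quadratic coordinates built from the frame, every such slab --- together with its degenerations (a single hyperplane, a half-space, all of $L$, the empty set) --- becomes the trace of an affine half-space. Performing this lift continuously as the frame varies and combining it with the parametrization of $k$-subspaces by $V_k(\rr^d)$, the configuration space becomes $S^d\times V_k(\rr^d)$, exactly as in the proof of \cref{thm:spheres-in-mass-assignments}: the $V_k(\rr^d)$ factor encodes the frame, and hence $L$, while the $S^d$ factor encodes the (lifted) slab of $L$.

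Next I would consider the test map $f\colon S^d\times V_k(\rr^d)\to\rr^{d+1}$ whose $i$-th coordinate at a pair (slab, frame) is $\mu_i^L(\mathrm{slab})-\tfrac{1}{2}$, for $i=0,\dots,d$. The $(\zz_2)^{k+1}$-action to use is generated by the involution exchanging a slab with its complement (which negates one distinguished coordinate of the $S^d$ factor) together with the $k$ sign flips of the frame vectors; one then checks that $f$ is equivariant for the induced action on $\rr^{d+1}$ and that this target representation contains no nonzero invariant vector. Since a zero of $f$ is exactly a linear subspace $L$ together with a slab of $L$ bisecting all $d+1$ assigned distributions, the proof is completed by the Borsuk--Ulam-type theorem for $S^d\times V_k(\rr^d)$ proved earlier in the paper --- the same one used for \cref{thm:spheres-in-mass-assignments} --- which forces $f$ to vanish. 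Sharpness of the number $d+1$ is then obtained by adapting the lower-bound construction for the classical Takahashi--Sober\'on theorem.

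The step I expect to be the main obstacle is, as the authors emphasize for the sphere case, arranging the parametrization so that the induced $(\zz_2)^{k+1}$-action on $S^d\times V_k(\rr^d)$ is precisely the one covered by the Stiefel Borsuk--Ulam theorem. The ``complement'' involution for slabs differs from the sphere-complement involution of \cref{thm:spheres-in-mass-assignments}, so one must verify that it still combines with the frame sign flips into a suitably nontrivial (ideally free) action, and that the quadratic lift extends continuously and equivariantly over all degenerate slabs without creating spurious zeros. Gluing these degenerate configurations --- the sphere at infinity of the configuration space --- to its interior should require the same care as in the single-subspace slab partition theorem.
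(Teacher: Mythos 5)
Your proposal has the right high-level instinct (lift a slab to a half-space via a quadratic map, then run a Stiefel Borsuk--Ulam argument), but the specific setup you describe is incompatible with the Borsuk--Ulam theorem proved in this paper, and it leaves the central geometric step unaddressed.

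The paper's \cref{thm:weird-action-equivariant} concerns $(\zz_2)^{d-k+1}$-equivariant maps $S^d\times V_{d-k}(\rr^d)\to R$ where $R=\rr^{d}\times\rr^{d-1}\times\dots\times\rr^{k}$; crucially the domain and target have the same dimension, and the proof is a parity argument for a one-dimensional preimage manifold that only makes sense in that equal-dimensional setting. Your proposed test map $f\colon S^d\times V_k(\rr^d)\to\rr^{d+1}$ sends a space of dimension $d+(d-1)+\dots+(d-k)$ to $\rr^{d+1}$, so it simply cannot be fed into \cref{thm:weird-action-equivariant}, nor into any other result in the paper; you would need a new Borsuk--Ulam statement with a highly unbalanced dimension count, and nothing here supplies it. The same mismatch occurs already in your recollection of the sphere proof: there the parametrizing space is $S^d\times V_{d-k}(\rr^d)$, not $S^d\times V_k(\rr^d)$, and the target is the block space $R$, not $\rr^{d+1}$.

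The second, more substantive gap is that you never say how a pair consisting of $w\in S^d$ and a $k$-frame of $\rr^d$ actually singles out a slab of $L$. In the paper this is the heart of the argument: $L$ is lifted into $\rr^{d+1}$ by a parabolic embedding $i$ that distinguishes one direction $v_d\in L$ (the common normal of the two hyperplanes), and then a hyperplane $H\perp w$ in $\rr^{d+1}$ cuts out a slab via $i^{-1}(i(L)\cap H)$. For this to produce two parallel hyperplanes of $L$ one must force $\langle w,(v_i,0)\rangle=0$ for $i=1,\dots,d-1$, and these orthogonality conditions (together with the conditions $\langle e_\ell,v_i\rangle=0$ forcing $L\supset\operatorname{span}\{e_1,\dots,e_{k-1}\}$) occupy the extra blocks $\rr^{d-1}\times\dots\times\rr^0$ of the target. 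It is precisely because the lift needs a chosen basis of $L$ including a distinguished last vector, and needs those $d-1$ orthogonality tests recorded in the map, that the paper parametrizes by $S^d\times V_d(\rr^d)$ rather than by $S^d\times V_{d-k}(\rr^d)$ as in the sphere case. Your $\rr^{d+1}$-valued map carries none of this information, so even if it had a zero you would have no guarantee that the configuration corresponds to an honest slab of $L$ (the hyperplane could contain $i(L)$, or its trace on $i(L)$ could fail to project to two parallel hyperplanes). Concretely: switch to the Stiefel manifold $V_d(\rr^d)$, use the parabolic lift $\sum a_j v_j\mapsto\bigl(\sum a_j(v_j,0)\bigr)+a_d^2 e_{d+1}$, handle the $\nu_0^L$-bisection by a canonical choice of translate of $w^\perp$, and put the remaining $d$ measure differences in the $\rr^d$ block of $R$ while filling the lower blocks with the orthogonality and direction-fixing conditions; then \cref{thm:weird-action-equivariant} with its parameter set to $0$ applies.
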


The case $k=d$ is the Takahashi--Sober\'on theorem.  An alternative, non-topological proof of the $k=d$ case was recently found by Hubard and Sober\'on \cite{Hubard2024}.  This result has also been extended to more parallel hyperplanes independently by Sadovek and Sober\'on, with very different methods \cite{Sadovek2024}.  

Another mass partition result whose mass assignment version is unknown is the existence of mass partitions by axis-parallel wedges in the plane \cite{Uno:2009wk}.  An axis-parallel wedge in $\rr^2$ is constructed by taking a point $p$ and two infinite rays starting from $p$, one vertical and one horizontal.  Note that the vertical ray could go into one of two possible directions, and the same holds for the horizontal ray.  Uno, Kawano, and Kano proved the following result:

\begin{theorem}[Uno, Kawano, Kano 2009 \cite{Uno:2009wk}]
    Given two mass distributions in the plane, there exists an axis-parallel wedge that bisects each mass distribution.
\end{theorem}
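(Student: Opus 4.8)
The plan is to fix the type of wedge appropriately and then reduce to a one--variable intermediate value argument. By a standard approximation argument one may assume the two mass distributions $\mu_1,\mu_2$ have smooth, strictly positive densities on all of $\rr^2$; the general case then follows by approximating the given masses and passing to a subsequential limit of the resulting wedges (the space of closed axis-parallel wedges, allowing degenerate members, is compact, and the measures of the wedges vary continuously). For such nice masses each vertical line cuts off a continuous, strictly increasing fraction of each $\mu_i$, so there is a unique vertical line $x=\alpha_i$ bisecting $\mu_i$ and a unique horizontal line $y=\beta_i$ bisecting $\mu_i$. Reflecting the plane in the $y$-axis if necessary we may assume $\alpha_1\le\alpha_2$, and reflecting in the $x$-axis if necessary we may assume $\beta_1\le\beta_2$; after a further, arbitrarily small translation of $\mu_2$ (absorbed into the approximation step) we may assume both inequalities are strict, $\alpha_1<\alpha_2$ and $\beta_1<\beta_2$.

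Next I single out the wedge type opening \emph{upward and to the left}: for $(a,b)\in\rr^2$ set $W(a,b):=\{x\le a\}\cap\{y\ge b\}$, a genuine axis-parallel wedge with corner $(a,b)$. Since $W(a,b)\subseteq\{x\le a\}$, this wedge can carry $\mu_i$-measure $1/2$ only if $\mu_i(\{x\le a\})\ge 1/2$, that is $a\ge\alpha_i$; so we restrict attention to corners with $a\ge\alpha_2$. For each fixed $a\ge\alpha_2$ and each $i\in\{1,2\}$, the map $b\mapsto\mu_i(W(a,b))$ is continuous and strictly decreasing, from $\mu_i(\{x\le a\})\ge 1/2$ as $b\to-\infty$ down to $0$ as $b\to+\infty$; hence there is a unique height $b_i(a)$ with $\mu_i(W(a,b_i(a)))=1/2$, and $a\mapsto b_i(a)$ is continuous on $(\alpha_2,\infty)$.

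The heart of the argument is to compare $b_1$ and $b_2$ at the two ends of the parameter interval. As $a\downarrow\alpha_2$ we have $\mu_2(\{x\le a\})\downarrow 1/2$, which forces the horizontal cut defining $b_2(a)$ to discard almost no mass and hence pushes $b_2(a)\to-\infty$ (this is exactly where positivity of the density on all of $\rr^2$ is used), while $\mu_1(\{x\le\alpha_2\})>1/2$ keeps $b_1(a)$ bounded; therefore $b_1(a)-b_2(a)\to+\infty$. As $a\to+\infty$ the region $W(a,b)$ exhausts the half-plane $\{y\ge b\}$, so $b_i(a)\to\beta_i$ and $b_1(a)-b_2(a)\to\beta_1-\beta_2<0$. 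By the intermediate value theorem there is $a^\ast\in(\alpha_2,\infty)$ with $b_1(a^\ast)=b_2(a^\ast)=:b^\ast$, and then the axis-parallel wedge $W(a^\ast,b^\ast)=\{x\le a^\ast\}\cap\{y\ge b^\ast\}$ satisfies $\mu_1(W(a^\ast,b^\ast))=\mu_2(W(a^\ast,b^\ast))=1/2$, as required; undoing the reflections and the approximation completes the proof.

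I expect the main difficulty to be conceptual rather than technical: deciding \emph{which} of the four orientations of the wedge to use. The reduction $\alpha_1<\alpha_2$, $\beta_1<\beta_2$ is precisely what makes the ``upward-and-to-the-left'' orientation work, because it is the orientation whose parameter domain $\{a\ge\alpha_2\}$ has one endpoint ($a=\alpha_2$) at which $b_1(a)$ and $b_2(a)$ are forced far apart and an opposite end ($a\to\infty$) governed by $\beta_1<\beta_2$, so that $b_1-b_2$ must change sign; for the ``upward-and-to-the-right'' orientation the analogous difference has the same sign at both ends and no crossing is forced. (Equivalently, each orientation $\tau$ yields a monotone curve $\gamma_i^\tau$ of corners on which the associated quadrant carries $\mu_i$-mass exactly $1/2$, and one must verify that for the chosen $\tau$ the curves $\gamma_1^\tau$ and $\gamma_2^\tau$ necessarily cross.) The remaining ingredients --- uniqueness and continuity of $b_i(a)$, the limiting behaviour at the two ends, and the approximation reducing to smooth positive densities --- are routine.
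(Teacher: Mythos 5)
Your proof is correct. The paper itself only quotes this theorem from Uno--Kawano--Kano, but its proof of the mass-assignment generalization (\cref{thm:axis-parallel-wedges-counterexample}) specializes at $d=2$ to a proof of it, and that proof is organized differently from yours: the paper fixes one measure, say $\mu_2$, and sweeps through the one-parameter family of \emph{down-wedges} bisecting $\mu_2$ (\cref{lemma:unique-down-wedge}), a closed path of partitions running from one half-plane of the vertical halving line of $\mu_2$, through down-wedges of both horizontal orientations and the horizontal halving line, to the other vertical half-plane; the signed $\mu_1$-discrepancy has opposite signs at the two ends because the endpoints are antipodal (the two sides swap under $v\mapsto -v$), so a zero exists and no choice of orientation is ever made. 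You instead commit to a single orientation in advance, justified by the normalization $\alpha_1<\alpha_2$, $\beta_1<\beta_2$ obtained from reflections and a small perturbation, and apply the intermediate value theorem to the difference $b_1(a)-b_2(a)$ of the bisecting heights. Both are elementary one-parameter IVT arguments, and your endpoint analysis ($b_2(a)\to-\infty$ while $b_1(a)$ stays bounded as $a\downarrow\alpha_2$; $b_i(a)\to\beta_i$ as $a\to\infty$) is sound; what the paper's symmetric, orientation-free formulation buys is precisely that it upgrades to the $\zz_2$-degree argument on $S^{d-2}\times[0,1]$ needed for the higher-dimensional mass-assignment version, whereas your reduction to a preferred quadrant does not obviously equivariantize. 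One minor point: after undoing the approximation your limiting wedge may degenerate to a vertical or horizontal halving line (for instance if $a^\ast\to\infty$), which is consistent with the paper's convention of admitting such degenerate wedges but should be said explicitly if the statement is read as requiring a genuine wedge.
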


An alternative proof can was found by Karasev, Rold\'an-Pensado, and Sober\'on \cite{Karasev:2016cn}.  We show that the result above has a mass assignment version for vertical planes in $\rr^d$.  We define a vertical plane in $\rr^d$ as a two-dimensional subspace that contains the direction $e_d = (0,\dots,0,1)$.  Given a vertical plane $H$ in $\rr^d$, the definition of an axis-parallel wedge in $H$ is still clear, as the vertical direction is contained in $H$, and the horizontal direction is simply the direction orthogonal to $e_d$ in $H$.

\begin{theorem}\label{thm:axis-parallel-wedges-counterexample}
    For any $d$ mass assignments to vertical planes in $\rr^d$ there exists a vertical plane $H$ and an axis-parallel wedge in $H$ that bisects each of the $d$ measures assigned to $H$.\end{theorem}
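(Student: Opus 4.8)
The plan is to parametrize all pairs (vertical plane, axis-parallel wedge) by a compact manifold of dimension $d$, reduce the statement to a zero-finding problem for a test map into $\rr^d$, and attack that with a Borsuk--Ulam-type theorem in the spirit of the proofs of \cref{thm:spheres-in-mass-assignments} and \cref{thm:goalposts}. Write $e_d^{\perp}=\rr^{d-1}$; for $v\in S^{d-2}\subset e_d^{\perp}$ let $H_v=\operatorname{span}(e_d,v)$, identified with $\rr^2$ via $(x,y)\mapsto xv+ye_d$, and let $\mu_1^v,\dots,\mu_d^v$ be the measures assigned to $H_v$ in these coordinates. An axis-parallel wedge in $H_v$ is an intersection $A\cap B$ where $A$ is a half-plane bounded by a line parallel to $e_d$ and $B$ a half-plane bounded by a line orthogonal to $e_d$; allowing the degenerate cases $\emptyset$ and $H_v$, the set of admissible $A$'s and the set of admissible $B$'s are each homeomorphic to a circle on which complementation acts antipodally. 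Hence $X=S^{d-2}\times S^1\times S^1$ is a compact $d$-manifold, and every (vertical plane, axis-parallel wedge) pair is represented by at least one point of $X$.

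I would then define $g_i\colon X\to\rr$ by $g_i(v,A,B)=\mu_i^v(A\cap B)-\tfrac12$ and set $G=(g_1,\dots,g_d)$; a zero of $G$ is precisely a vertical plane together with an axis-parallel wedge bisecting all $d$ assigned measures, so it suffices to show $G^{-1}(0)\neq\emptyset$. Before any topology I would reduce to general position, approximating each mass assignment by one with smooth strictly positive densities on every plane and recovering the general case by a standard compactness argument. Two boundary identities will then drive the proof: $g_i\equiv-\tfrac12$ on the loci where $A$ or $B$ is degenerate-empty, and on the locus $\{B=H_v\}$ one has $g_i(v,A)=\mu_i^v(A)-\tfrac12$, a genuine half-space bisection, which is odd under $A\mapsto A^{c}$; symmetrically on $\{A=H_v\}$.

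For the topological heart I would model the argument on the planar proof of Karasev, Rold\'an-Pensado, and Sober\'on: for two measures in the plane one slides a wedge along the curve $\{\mu_1(\text{wedge})=\tfrac12\}$, which joins the two vertical medians of $\mu_1$, and applies the intermediate value theorem to $\mu_2$, whose values at those two degenerate wedges are $\tfrac12$-complementary. In the present setting the curve is replaced by the zero set $Z=\{g_1=\dots=g_{d-1}=0\}\subset X$, generically a $1$-manifold, and the goal becomes to show $g_d$ changes sign on $Z$. Combining the antipodal action on $S^{d-2}$ with the complementation involutions on the two circle factors, and tracking how $G$ meets the degenerate loci above, one should be able to invoke the Borsuk--Ulam-type theorems for spheres and Stiefel manifolds proved earlier in the paper to force $Z$ to connect an oddly matched pair of configurations and hence to produce a zero of $g_d$ on $Z$. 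The count $d$ should be optimal, which (via an explicit family of $d+1$ mass assignments, also showing that the restriction to vertical planes cannot be removed) accounts for the word ``counterexample'' in the label.

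The main obstacle I anticipate is that the functions $g_i$ are \emph{not} odd for the natural $\zz_2$-actions on $X$: complementing $A$ sends $\mu_i^v(A\cap B)$ to $\mu_i^v(B)-\mu_i^v(A\cap B)$, which carries the extra term $\mu_i^v(B)$, and the only recombination of $\mu_i^v(A\cap B)$, $\mu_i^v(A)$ and $\mu_i^v(B)$ that is simultaneously odd for both circle involutions detects the weaker ``checkerboard'' condition $\mu_i^v(A\cap B)+\mu_i^v(A^c\cap B^c)=\tfrac12$, not a single-quadrant bisection. So the equivariant machinery cannot be applied to $G$ on all of $X$ directly; one has to localize to the degenerate sub-loci, where $g_i$ really is a half-space bisection and hence odd, and then propagate the sign information across the full $1$-dimensional zero set $Z$. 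Making this propagation rigorous --- controlling the global topology of $Z$, its intersections with the degenerate loci, and the requisite transversality --- is where the bulk of the work, and the need for the new Borsuk--Ulam-type theorems, will lie.
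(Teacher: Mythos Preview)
Your proposal has a genuine gap, and it stems from over-parametrizing the space of candidate wedges. The paper does \emph{not} use the Stiefel-manifold Borsuk--Ulam machinery of \cref{thm:weird-action-equivariant} here; it uses a much more elementary degree argument on $S^{d-2}$, made possible by two reductions you are missing.

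First, the paper restricts to \emph{down-wedges} only (the vertical ray points in the $-e_d$ direction). Second, and crucially, it spends one of the $d$ measures to cut the configuration space down by a dimension: under a mild genericity assumption on $\mu_d$, for each vertical plane $H_v$ and each real $t$ there is a \emph{unique} down-wedge with vertical ray on the line $x=t$ that bisects $\mu_d^{H_v}$ (\cref{lemma:unique-down-wedge}). This collapses your $S^1\times S^1$ factor to a single interval $[\lambda_v,\infty]$, compactified to $[0,1]$: at $t=1$ the bisecting down-wedge is the vertical halving line of $\mu_d^{H_v}$, and at $t=0$ it is the horizontal halving line. The test map is then
\[
f\colon S^{d-2}\times[0,1]\to\rr^{d-1},\qquad f_i(v,t)=\mu_i^{H_v}(A^v(t))-\mu_i^{H_v}(B^v(t))\quad(i=1,\dots,d-1),
\]
with target dimension $d-1$, not $d$. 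The point is that at the two ends of the interval the behaviour under $v\mapsto -v$ is opposite: at $t=1$ the two sides of the vertical halving line swap, so $f(\cdot,1)$ is odd; at $t=0$ the horizontal halving line is insensitive to the sign of $v$, so $f(\cdot,0)$ is even. If $f$ never vanished, normalizing would give a homotopy between a map $S^{d-2}\to S^{d-2}$ of odd degree and one of even degree, a contradiction.

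The obstacle you flagged---that $\mu_i^v(A\cap B)-\tfrac12$ is not odd under the obvious involutions on $S^1\times S^1$---is exactly why your parametrization does not feed into any of the equivariant theorems in the paper. The resolution is not to ``propagate sign information across $Z$'' but to kill that extra degree of freedom with $\mu_d$ so the remaining map has honest odd/even boundary behaviour in the single variable $v$. Your remark about the word ``counterexample'' in the label is also a misreading; it is just a label, and the paper proves no optimality statement for this theorem.
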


\section{Stiefel manifolds and equivariant maps}

In this section we establish the results regarding equivariant maps required for our main results.  We denote by $V_k(\rr^d)$ the Siefel manifold of orthonormal $k$-frames in $\rr^d$.  Namely,
\[
V_k(\rr^d) = \{(v_1, \dots, ,v_k): v_1, \dots, v_k \in \rr^d \mbox{ are orthonormal}\}.
\]

Note that $V_1 (\rr^d) = S^{d-1}$, a $(d-1)$-dimensional sphere, and that the dimension of $V_{k}(\rr^d)$ is equal to $(d-1) + (d-2) + \dots + (d-k)$.  We consider the group $\zz_2 = \{+1,-1\}$ with multiplication.  There is a free action of $(\zz_2)^k$ on $V_k(\rr^d)$ given by
\[
(\lambda_1, \dots, \lambda_k)\cdot (v_1, \dots, v_k) = (\lambda_1 v_1, \dots, \lambda_kv_k).
\]
To simplify notation later in the manuscript, we will work with $V_{d-k}(\rr^d)$ instead of $V_k(\rr^d)$.  The product $S^d \times V_{d-k}(\rr^d)$ has a free action of $(\zz_2)^{d-k+1}$ by taking the product of the action of $\zz_2$ on $S^d$ and the action of $(\zz_2)^{d-k}$ on $V_{d-k}(\rr^d)$.

Another space with an action of $(\zz_2)^{d-k+1}$ is $R = \rr^{d} \times \rr^{d-1} \times \dots \times \rr^k$.  Note that $R$ has the same dimension as $S^d \times V_{d-k}(\rr^d)$.  To describe the action of $(\zz_2)^{d-k+1}$ on $R$, we describe the action of its generators.  For $i=0,\dots, d-k$, let $m_i = (\underbrace{1,\dots, 1,}_{i}-1,\underbrace{1,\dots, 1}_{d-k-i})$.  Let $(x_0, x_1,\dots, x_{d-k}) \in R$.  Then, for $i=1,\dots,d-k$, the action $m_i\cdot (x_0, x_1,\dots, x_{d-k})$ changes the sign of every coordinate of $x_i$ and nothing else.  Finally, $m_0\cdot (x_0, x_1,\dots, x_{d-k})$ changes the sign of every coordinate of $x_0$ and the first coordinate of each of $x_1, \dots, x_{d-k}$.

The main topological tool we need is the following.

\begin{theorem}\label{thm:weird-action-equivariant}
    Let $d \ge k \ge 0$ be integers.  Let $f: S^d \times V_{d-k}(\rr^d) \to R$ be a continuous $(\zz_2)^{d-k+1}$-equivariant map, with the actions described above.  Then, $f$ must have a zero.
\end{theorem}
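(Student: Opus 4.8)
I would prove \cref{thm:weird-action-equivariant} by an Euler class (equivalently, Fadell--Husseini index) computation. Write $G=(\zz_2)^{d-k+1}$ and suppose, for contradiction, that $f$ is nowhere zero; then $f/\lVert f\rVert$ is a $G$-equivariant map $S^d\times V_{d-k}(\rr^d)\to S(R)$, the unit sphere of the $G$-representation $R$. Since $G$ is an elementary abelian $2$-group, $R$ splits as a direct sum of one-dimensional representations, $S(R)$ is their iterated join, and its $\zz_2$-index is the principal ideal generated by the Euler class $e_G(R)=\prod_i\alpha_i\in H^{\dim R}(BG;\zz_2)$, where $\alpha_i\in H^1(BG;\zz_2)$ are the characters of the one-dimensional summands. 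A $G$-map $S^d\times V_{d-k}(\rr^d)\to S(R)$ forces the index of $S(R)$ into the index of the domain; in particular $e_G(R)$ must map to $0$ in $H^*_G(S^d\times V_{d-k}(\rr^d);\zz_2)$. So it suffices to prove that the image of $e_G(R)$ in $H^*_G(S^d\times V_{d-k}(\rr^d);\zz_2)$ is nonzero. (Equivalently: the associated $\dim R$-dimensional vector bundle over the Borel construction has nonzero Euler class, so every section of it — including the section defined by $f$ — must vanish.)

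Next I would make this explicit. Write $t_0,\dots,t_{d-k}\in H^1(BG;\zz_2)$ for the classes dual to $m_0,\dots,m_{d-k}$. Reading off the action, the $\rr^d$-factor of $R$ contributes $d$ copies of the character $t_0$, and the $\rr^{d-j}$-factor ($1\le j\le d-k$) contributes one copy of $t_0+t_j$ (its first coordinate) and $d-j-1$ copies of $t_j$, so
\[
e_G(R)=t_0^{\,d}\,\prod_{j=1}^{d-k}(t_0+t_j)\,t_j^{\,d-j-1}.
\]
Because $G=\langle m_0\rangle\times\langle m_1,\dots,m_{d-k}\rangle$ acts as a product — antipodally on $S^d$ through the first factor, and by coordinate sign changes on the frame vectors of $V_{d-k}(\rr^d)$ through the second — the Borel construction splits as a product and
\[
H^*_G\bigl(S^d\times V_{d-k}(\rr^d);\zz_2\bigr)\;\cong\;\zz_2[t_0]/(t_0^{\,d+1})\;\otimes\;H^*_{(\zz_2)^{d-k}}\bigl(V_{d-k}(\rr^d);\zz_2\bigr),
\]
the second factor with the sign action. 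Since $t_0^{\,d+1}=0$ there, every summand of $e_G(R)$ of $t_0$-degree $>d$ dies, and $e_G(R)$ maps to $t_0^{\,d}\otimes\bigl(t_1^{\,d-1}t_2^{\,d-2}\cdots t_{d-k}^{\,k}\bigr)$. As $t_0^{\,d}\ne 0$ in $\zz_2[t_0]/(t_0^{\,d+1})$, the whole problem reduces to the single non-vanishing statement that $t_1^{\,d-1}t_2^{\,d-2}\cdots t_{d-k}^{\,k}$ is nonzero in $H^*_{(\zz_2)^{d-k}}(V_{d-k}(\rr^d);\zz_2)$ — a top-degree class there.

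This non-vanishing is a form of the Borsuk--Ulam theorem for Stiefel manifolds and is where the real work lies; I would prove it by induction on the number $m$ of frame vectors, using the bundle $V_m(\rr^d)\to V_{m-1}(\rr^d)$ that forgets the last vector, whose fibre is $S^{d-m}$ and on which the $m$-th copy of $\zz_2$ acts antipodally. On Borel constructions this realises $E(\zz_2)^m\times_{(\zz_2)^m}V_m(\rr^d)$ as the sphere bundle of $W\otimes\xi_m$, where $\xi_m$ is the line bundle of the $m$-th sign character (so $w_1(\xi_m)=t_m$) and $W$ is the rank $d-m+1$ bundle with fibre $\{v_1,\dots,v_{m-1}\}^\perp$ over a frame $(v_1,\dots,v_{m-1})$. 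The transgression of the fibre class is the equivariant Euler class $w_{d-m+1}(W\otimes\xi_m)=t_m^{\,d-m+1}+(\text{lower order in }t_m)$, which is monic in $t_m$; hence it is a non-zero-divisor, the Serre spectral sequence collapses after this single differential, and
\[
H^*_{(\zz_2)^m}\bigl(V_m(\rr^d);\zz_2\bigr)\;\cong\;H^*_{(\zz_2)^{m-1}}\bigl(V_{m-1}(\rr^d);\zz_2\bigr)[t_m]\big/\bigl(w_{d-m+1}(W\otimes\xi_m)\bigr),
\]
a free module over $H^*_{(\zz_2)^{m-1}}(V_{m-1}(\rr^d);\zz_2)$ on $1,t_m,\dots,t_m^{\,d-m}$. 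Starting from $V_1(\rr^d)=S^{d-1}$, where $H^*_{\zz_2}(S^{d-1};\zz_2)=\zz_2[t_1]/(t_1^{\,d})$, and iterating, one obtains a $\zz_2$-basis $\{\,t_1^{a_1}\cdots t_m^{a_m}:0\le a_i\le d-i\,\}$; the required monomial $t_1^{\,d-1}\cdots t_m^{\,d-m}$ is precisely the top basis element, hence nonzero. Taking $m=d-k$ completes the argument.

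The step I expect to be the main obstacle is exactly this spectral-sequence computation: one must correctly identify the transgression with $w_{d-m+1}(W\otimes\xi_m)$ and verify that it is monic in $t_m$ (which is what forces the cleanest possible degeneration and the freeness), and one should separately handle the mild degenerate cases $m=d$ (fibre $S^0$, $V_d(\rr^d)=O(d)$ disconnected) and $k=0$ (an $\rr^0$ factor that contributes nothing). A shortcut would be to invoke instead the known Stiefel Borsuk--Ulam theorem — that there is no $(\zz_2)^m$-equivariant map $V_m(\rr^d)\to S\!\bigl(\bigoplus_{j=1}^m L_{t_j}^{\oplus(d-j)}\bigr)$ — after which only the elementary index/Euler-class bookkeeping of the first two paragraphs remains.
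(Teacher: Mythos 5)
Your argument is correct, but it is not the route the paper takes: the paper proves \cref{thm:weird-action-equivariant} by a B\'ar\'any-style homotopy argument. It constructs an explicit equivariant test map $g$ whose zero set is a single free $(\zz_2)^{d-k+1}$-orbit, linearly homotopes $f$ to $g$, perturbs the homotopy to be transverse to $0$ so that the preimage of $0$ is a compact $1$-manifold, and then rules out the possibility that an arc joins two zeros of $g$ by noting that the group element identifying its endpoints would preserve the arc and hence (by the intermediate value theorem) have a fixed point, contradicting freeness. Your proof is essentially the third alternative the paper itself mentions but does not carry out: the Fadell--Husseini/Euler-class computation. Your bookkeeping is right --- the weights of $R$ give $e_G(R)=t_0^{d}\prod_{j=1}^{d-k}(t_0+t_j)t_j^{d-j-1}$, the K\"unneth splitting of the Borel construction kills every term of $t_0$-degree greater than $d$, and what survives is $t_0^{d}\otimes t_1^{d-1}\cdots t_{d-k}^{k}$, whose nonvanishing is exactly the Fadell--Husseini computation for Stiefel manifolds that your Gysin/spectral-sequence induction establishes (the transgression $\sum_i w_i(W)t_m^{d-m+1-i}$ is monic in $t_m$, so the quotient is free on $1,t_m,\dots,t_m^{d-m}$). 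The two minor caveats are as you flag them: for $k=0$ the last factor $\rr^0$ contributes nothing, so the product in $e_G(R)$ runs only over $j\le d-1$ (the surviving class is then $t_0^{d}\otimes t_1^{d-1}\cdots t_{d-1}^{1}$, still a basis element), and the containment $e_G(R)\in\mathrm{Ind}_G(S(R))$ from the Gysin sequence is all you need, so the stronger claim that the index equals the principal ideal is harmless but unnecessary. The trade-off: the paper's proof uses only transversality and elementary point-set arguments and produces the zero rather concretely, while yours requires the equivariant cohomology of Stiefel manifolds but localizes all the difficulty in one reusable nonvanishing statement and makes transparent exactly which representations $R$ the theorem can accommodate.
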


For the case $k=d$, we consider $\rr^0 = \{0\}$.  \cref{thm:weird-action-equivariant} is similar to the one used by Sober\'on and Takahashi to prove extensions of the ham sandwich theorem using pairs of parallel hyperplanes \cite{Soberon2023}, but in their proof methods the action of $m_0$ did not affect any coordinates of $x_i$ for $i \ge 1$.  Yet, we can use \cref{thm:weird-action-equivariant} to prove the Sober\'on--Takahashi theorem, which would simplify the original proof slightly (one additional trick used by Sober\'on and Takahashi is no longer needed).

We can prove \cref{thm:weird-action-equivariant} using one of several different approaches.  One is to verify the conditions of the existence of zeros of similar maps by Chan, Chen, Frick, and Hull \cite{Chan2020}.  Another, using the fact that the domain and co-domain of $f$ can be approximated by piecewise-linear manifolds of the same dimension, is to apply Musin's general theorem for the existence of zeros for equivariant maps between such spaces directly \cite{Mus12}.  A third would be to compute the Fadell--Husseini index of the two $(\zz_2)^{d-k+1}$-spaces \cite{Fadell:1988tm}.  Fadell and Husseini computed the indices for Stiefel manifolds and sphere, and the effect of the modified action on $R$ would be the only additional detail to revise.

For the sake of completeness, we include a proof similar to the one by Musin, based on B\'ar\'any's proof of the Borsuk--Ulam theorem \cite{Barany1980}.  Similar modifications have been used for $SO(3)$-equivariant maps by Fradelizi et al. \cite{Fradelizi2022}.

\begin{proof}
    We begin by constructing a particular $(\zz_2)^{d-k+1}$-equivariant map 
    \begin{align*}
        g:S^d \times V_{d-k}(\rr^d) & \to R \\
        (w, v_1, \dots, v_{d-k}) & \mapsto (x_0, x_1,\dots, x_{d-k})
    \end{align*}
    In the statement above, $w \in S^d \subset \rr^{d+1}$ and $(v_1, \dots, v_{d-k}) \in V_{d-k}(\rr^d)$.  Let $v_{i,j}$ be the $j$-th coordinate of $v_i$, and $w_j$ be the $j$-th coordinate of $w$.

    Then, we define 

    \begin{align*}
        x_0 = \begin{bmatrix}
            w_1 \\ w_2 \\ \vdots \\ w_{d}
        \end{bmatrix} \in \rr^d, \qquad v_i = \begin{bmatrix}
            w_{d+1} \cdot v_{i,1} \\ v_{i,2} \\ \vdots \\ v_{i,d-i}
        \end{bmatrix}\in \rr^{d-i} \mbox{ for }i=1,\dots,d-k.
    \end{align*}

    This map is $(\zz_2)^{d-k+1}$-equivariant by construction. Moreover, if $g(w,v_1,\dots, v_{d-k})=0$, then $w = \pm e_{d+1}$, the north or south pole of $S^d$.  This means that $w_{d+1}= \pm 1$.  Then, an inductive argument shows that $v_i = \pm e_{d-i}$, the $(d-i)$-th canonical vector of $\rr^{d}$ or its negative.  Overall, there is exactly one $(\zz_2)^{d-k+1}$ orbit of zeros in $S^d \times V_{d-k}(\rr^d)$.

    Now consider the map

    \begin{align*}
        H:S^d \times V_{d-k}(\rr^d)\times[0,1] & \to R \\
        (w, v_1, \dots, v_{d-k},t) & \mapsto t \cdot f(w, v_1, \dots, v_{d-k}) + (1-t)\cdot g(w, v_1, \dots, v_{d-k}).
    \end{align*}

    If we denote by $f_t (\cdot) = H(\cdot, t)$, we can see that $f_t$ is a $(\zz_2)^{d-k+1}$-equivariant map, $f_0 = g$, and $f_1 = f$, our target map.  If $H$ happens to be smooth enough, then $H^{-1}(0)$ will be a $1$-dimensional manifold.  This means that its connected components will be segments or cycles.  The components which are segments must have their enpoints in the boundary of $S^d \times V_{d-k}(\rr^d)\times[0,1]$, which is $\big(S^d \times V_{d-k}(\rr^d)\times\{0\}\big) \bigcup \big(S^d \times V_{d-k}(\rr^d)\times\{1\}\big)$.  In other words, they connect zeros of $g$ with either zeros of $g$ or zeros of $f$.  In the latter case, this would imply the existence of zeros of $f$, and we would be done.  Let us assume for the contradiction that these segments connect zeros of $g$ with zeros of $g$. 
    
    The action of $(\zz_2)^{d-k+1}$-equivariant map  acts on the connected components of $H^{-1}(0)$, and therefore must send the connected components which are segments to segments.  Suppose $p_0$ is a zero of $g$ that is connected via a segment $I$ in $H^{-1}(0)$ to $p_1$, another zero of $g$.  Since there is exactly one orbit of zeros of $g$, there must exist $\lambda \in (\zz_2)^{d-k+1}$ such that $\lambda \cdot p_0 = p_1$.  In particular, $\lambda (I) = I$.  Multiplication by $\lambda$ is a continuous function, so by the intermediate value theorem $\lambda|_{I}$ must have a fixed point.  This contradicts the fact that the action of $(\zz_2)^{d-k+1}$ on $S^d \times V_{d-k}(\rr^d)\times[0,1]$ is free.

    If $H$ is not smooth enough, we can still approximate it by sufficiently smooth maps $H'$ which agree with $H$ on $V_{d-k}(\rr^d)\times\{0\}$ by applying Thom's transversality theorem \citelist{\cite{thom1954quelques} \cite{guillemin2010differential}*{pp 68-69}}.  The compactness of $S^d \times V_{d-k}(\rr^d)\times[0,1]$ makes the approximation argument give a zero of the original function $f$.
\end{proof}

\section{Spheres and mass assignments}

In this section we use \cref{thm:weird-action-equivariant} to prove \cref{thm:spheres-in-mass-assignments}.  We also show that \cref{thm:spheres-in-mass-assignments} is optimal.

\begin{proof}[Proof of \cref{thm:spheres-in-mass-assignments}]
    To show that we can impose any $k-1$ directions on $H$, we will impose that $H$ contains each of the directions $e_1, \dots, e_{k-1}$.  The methods below work for any $k-1$ arbitrary vectors.
    
    We first construct a $(\zz_2)^{d-k+1}$-equivariant function $f: S^d \times V_{d-k}(\rr^d) \to R = \rr^d \times \rr^{d-1} \times \dots \times \rr^{k}$.  The goal will be that the zeros of this function yield pairs $(L,S)$ where $L$ is a $k$-dimensional subspace of $\rr^d$ and $S \subset L$ is a sphere that bisects each of the mass distributions assigned to $S$.  Let $\mu_0, \dots, \mu_d$ be the $d+1$ mass assignments to the $k$-dimensional subspaces of $L$.  Given a subspace $L$ we will denote by $\mu_0^L,\dots, \mu_d^L$ the mass distributions assigned to $L$.

    Let $i: \rr^d \hookrightarrow \rr^{d+1}$ be the embedding that appends a coordinate $1$; so $i(x) = (x,1)$.  Let $V = i(\rr^d)$ be the embedding of $\rr^d$ into $\rr^{d+1}$.  Let $\sigma$ be the inversion in $\rr^{d+1}$ of radius $1$ centered at the origin.  Namely, $\sigma(x) = x/\|x\|^2$ for $x \neq 0$.  Note that $\sigma$ sends affine hyperplanes to spheres that contain $0 \in \rr^{d+1}$.  Additionally, $\sigma \circ \sigma$ is the identity.  We include an illustration in \cref{fig:inversion-1}.  This means that if $H$ is an affine hyperplane in $\rr^{d+1}$, then $\sigma (H)$ is a sphere in $\rr^{d+1}$.  In other words, $\sigma (H )\cap V$ will be a sphere in $V=i(\rr^d)$.  This construction is represented in \cref{fig:inversion-2}.

\begin{figure}
    \centering
    \includegraphics[width=0.7\linewidth]{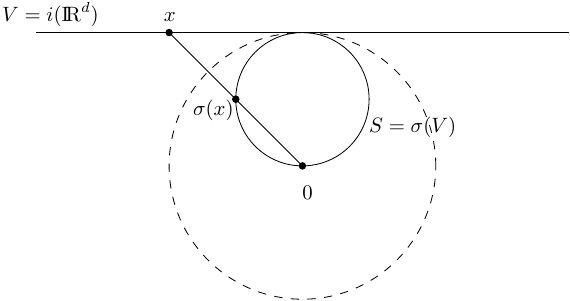}
    \caption{An exemplification of the construction for $S$.  We embed $\rr^d$ into $\rr^{d+1}$ and then use the inversion $\sigma$.  The dashed sphere is the set of fixed points by $\sigma$.}
    \label{fig:inversion-1}
\end{figure}

\begin{figure}
    \centering
    \includegraphics[width=0.7\linewidth]{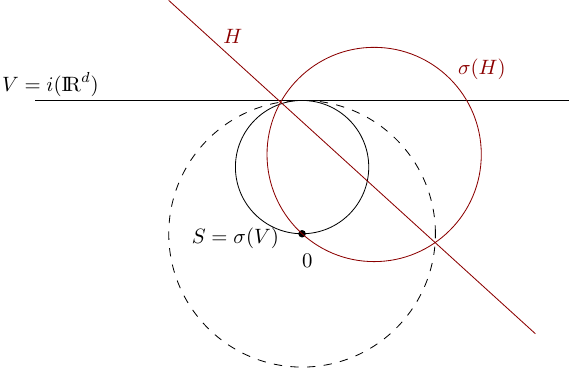}
    \caption{If we have a hyperplane $H$ halving the measures induced in $S$, then $\sigma(H)$ is a sphere so that $\sigma(H) \cap V$ halves the measures in $V$.  If $0\in H$, then $\sigma(H) = H$, which we count as a degenerate sphere.}
    \label{fig:inversion-2}
\end{figure}

    Denote by $S = \sigma(V)$ the sphere corresponding to $\rr^d$ after the inversion $\sigma$.  $S$ is a sphere of radius $1/2$ with center at $i(0)/2=(0,\dots, 0,1/2)\in \rr^{d+1}$.   Given a $k$-dimensional subspace $L$ of $\rr^d$, consider $L' = \sigma \circ i (L)$ the corresponding subset of $S$.  A measure $\mu$ on $L$ induces a measure on $L'$ and therefore a measure $\mu'$ on $\rr^{d+1}$.

    Before we continue, let us explain the main strategy behind our construction.  We will use the space $S^d \times V_{d-k}(\rr^d)$ to parametrize pairs $(H,L)$, where $H$ is a hyperplane in $\rr^{d+1}$ and $k$-dimensional subspace $L$ of $\rr^d$.  When testing a pair $(H,L)$, we will lift $L$ to $\rr^{d+1}$ by looking at the set $L'$, and obtain $d+1$ measures on $\rr^{d+1}$ using the mass distributions assigned to $L'$.  We will be able to assume without loss of generality that $H$ halves one of those measures.  We will then test if $H$ halves the rest of these measures.  If it does, then $i^{-1}(\sigma(L' \cap H)$ will be a the sphere in $L$ we are looking for.  One delicate point in the proof is that we have to make sure that $H$ does not contain $L'$.

    Given $(w,v_1,\dots, v_{d-k}) \in S^d \times V_{d-k}(\rr^d)$, let $L$ be the orthogonal complement of $\operatorname{span}\{v_1,\dots, v_{d-k}\}$.  Our candidate of $H$ will be orthogonal to $w$.  We describe below how we choose the correct translate of $w^{\perp}$.  We also consider
    \[
    U = \operatorname{span}\{(v_1,0), \dots, (v_{d-k},0)\},
    \]
    which we will use to make sure that $H$ does not contain $L'$.  Let $\nu_0^L, \dots, \nu_{d}^L$ be the $d+1$ measures in $\rr^{d+1}$ obtained from lifting each of $\mu_{0}^L, \dots, \mu_{d}^L$ to $\sigma \circ i (L) \subset S \subset \rr^{d+1}$. 
    Among all translates of $w^{\perp}$, let $H$ be the one such that its two closed half-spaces have the same $\nu_0^L$-measure.  It is possible that there is an interval of translates that satisfy this condition.  If this is the case, we pick the translate in the midpoint of this interval.

    Note that $L' \subset H$ if and only if $w \in U$.  Moreover, if $\langle w, (v_i,0)\rangle = 0$ for all $i$, then $w \not\in U$, which implies that $L' \not\subset H$.  Now we are ready to construct the map
    \begin{align*}
    f: S^d \times V_{d-k}(\rr^d)& \to \rr^{d} \times \dots \times \rr^{k} \\
    (w,v_1,\dots, v_{d-k}) & \mapsto (x_0, \dots, x_{d-k}).
    \end{align*}

    Let $H^+$ be the closed half-space of $H$ in the direction of $w$, and $H^{-}$ be the closed half-space of $H$ in the direction of $-w$.

    We first define
    \[
x_0 = \operatorname{dist}(w,U)\begin{bmatrix}
    \nu_1^L(H^+) - \nu_1^L(H^-) \\
    \vdots \\
    \nu_d^L(H^+) - \nu_d^L(H^-)
\end{bmatrix} \in \rr^d
    \]
    and for $i = 1, \dots, d-k$, we define
    \[
    x_i = \begin{bmatrix}
        \langle w, (v_i,0)\rangle \\
        \langle e_1 , v_i \rangle \\
        \langle e_{2} , v_i \rangle \\
        \vdots \\
        \langle e_{k-1}, v_i \rangle \\
        0 \\
        \vdots \\
        0
    \end{bmatrix} \in \rr^{d-i}
    \]
    We are fixing the first $k$ coordinates of $x_i$, and then filling the rest with zero.  The function $f$ is $(\zz_2)^{d-k+1}$-equivariant by construction.  The use of $\operatorname{dist}(w,U)$ in the definition of $x_0$ is to make sure the function is continuous even when $w \in U$.

    By \cref{thm:weird-action-equivariant}, the map $f$ must have a zero. 
 Suppose that $(w,v_1,\dots, v_{d-k})$ is a zero of $f$.  In this case, since $\langle w, (v_i,0)\rangle = 0$ for all $i$, we have that $w \not\in U$.  Therefore, $K=i^{-1} (\sigma (H \cap L'))$ will be a sphere in $L$.  Since $x_0 = 0$ and $\operatorname{dist}(w,U)>0$, we also have $ \nu_j(H^+) = \nu_j(H^-)$ for $j=1,\dots, d$, which means that $K$ will bisect each of the $d+1$ masses assigned to $H$ (the first mass distribution is bisected by construction).  Finally, since $\langle e_l, v_i \rangle = 0$ for $l=1,\dots, k-1$ and $i = 1,\dots, d-k$, we have that $L$ will contain the directions $e_1, \dots, e_{k-1}$.
\end{proof}

Now we show that \cref{thm:spheres-in-mass-assignments} is optimal, even if we ignore the condition of prescribing $k-1$ directions for $L$.

\begin{claim}\label{claim:optimality}
    Let $1 \le k \le d$ be integers.  There exists a set of $d+2$ mass assignments on the $k$-dimensional subspaces of $\rr^d$ for which there does not exist a $k$-dimensional space $L$ and a sphere in $L$ that bisects each of the mass distributions assigned to $L$.
\end{claim}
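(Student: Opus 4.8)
\emph{Proof proposal.}

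I would realize the $d+2$ mass assignments by concentrating them near the projections of a carefully chosen point configuration. Fix points $p_0,\dots,p_{d+1}\in\rr^d$ in the unit ball (to be specified below) and a small $\varepsilon>0$, and for each $j$ let $\mu_j$ be the mass assignment that sends a $k$-dimensional affine subspace $A$ to a mass distribution supported in the ball of radius $\varepsilon$ of $A$ centered at the foot of the perpendicular $\operatorname{proj}_A(p_j)\in A$ (e.g.\ the uniform measure on that ball). Since an $\varepsilon$-ball centered at $q$ depends continuously on $q$ and $A$ with no choice of frame involved, this is a genuine continuous mass assignment. If some $k$-space $L$ and some generalized sphere $S\subset L$ bisected $\mu_0^L,\dots,\mu_{d+1}^L$ simultaneously, then $S$ would have to pass within $\varepsilon$ of every point $\operatorname{proj}_L(p_j)$, since otherwise the corresponding blob lies entirely inside one of the two sides determined by $S$.

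Thus it suffices to choose $p_0,\dots,p_{d+1}$ so that for every linear $k$-subspace $W\le\rr^d$ the $d+2$ points $\operatorname{proj}_W(p_0),\dots,\operatorname{proj}_W(p_{d+1})$ of $W\cong\rr^k$ do not all lie on a common generalized sphere of $W$ (equivalently, after the paraboloid-type lift, their images in $\rr^{k+1}$ are affinely independent). Indeed, translating $L$ does not change the congruence type of the projected configuration, and identifying generalized spheres of $\rr^k$ with hyperplanes of $\rr^{k+1}$ by the same lifting device used in the proof of \cref{thm:spheres-in-mass-assignments} shows that the generalized spheres of $\rr^k$ meeting a fixed bounded region form a compact family (the rest bisect nothing). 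Combined with compactness of the Grassmannian $G_k(\rr^d)$ and continuity, the qualitative "no common generalized sphere" statement upgrades to a uniform constant $\delta>0$ with $\max_j\operatorname{dist}\!\big(\operatorname{proj}_L(p_j),S\big)\ge\delta$ over all pairs $(L,S)$; any $\varepsilon<\delta$ then proves the claim.

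It remains to produce such a configuration. For $k=d$ this is exactly the sharpness of the Stone--Tukey sphere theorem: $d+2$ points in general position in $\rr^d$ lift to $d+2$ affinely independent points of $\rr^{d+1}$, lying on no hyperplane. For $k=1$ it is elementary: a generalized sphere of a line is a pair of points, so a bad line would force $\operatorname{proj}_W(p_0),\dots,\operatorname{proj}_W(p_{d+1})$ to take at most two values, i.e.\ the $p_j$ to lie in a union of two parallel hyperplanes, which a configuration in general position avoids. The range $2\le k\le d-1$ is the crux, and I expect it to be the main obstacle: the naive parameter count for the incidence variety
\[
\{(p_0,\dots,p_{d+1},W)\in(\rr^d)^{d+2}\times G_k(\rr^d):\ \operatorname{proj}_W(p_\bullet)\text{ is cospherical}\}
\]
is only borderline against $(\rr^d)^{d+2}$ because $G_k(\rr^d)$ is positive-dimensional, so "general position'' cannot simply be invoked. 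I would resolve this by writing cosphericity of the projected points as the simultaneous vanishing of the $(k+2)\times(k+2)$ minors of an explicit matrix built from $|\operatorname{proj}_W(p_j)|^2,\operatorname{proj}_W(p_j),1$, and then either (i) running a transversality/Sard argument to show that the configurations admitting some bad subspace form a closed nowhere-dense subset of $(\rr^d)^{d+2}$, so that a generic $p_\bullet$ works, or (ii) exhibiting one explicit configuration (points on a suitably nondegenerate curve) for which no $W$ is bad. Either way, once a good configuration is fixed, the compactness argument of the previous paragraph completes the proof.
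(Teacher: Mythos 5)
Your first two paragraphs are sound: small symmetric blobs force any bisecting sphere to pass within $\varepsilon$ of each projected center, and the compactness argument upgrading ``never cospherical'' to a uniform $\delta>0$ works once degenerate spheres are handled via the lift. The genuine gap is that the statement you defer to the final paragraph --- a configuration $p_0,\dots,p_{d+1}$ whose projections to \emph{every} $k$-subspace are non-cospherical --- carries the entire weight of the claim, and your route (i) cannot produce it. Cosphericity of $d+2$ points in $\rr^k$ is the condition that the $(d+2)\times(k+2)$ lifted matrix drop rank, which has codimension $(d+2)-(k+1)=d-k+1$, while $\dim G_k(\rr^d)=k(d-k)$; one has $k(d-k)\ge d-k+1$ exactly when $2\le k\le d-1$, i.e.\ precisely in the range you call the crux. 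So the incidence variety of pairs (configuration, bad subspace) has dimension at least that of $(\rr^d)^{d+2}$, and the expected picture is that \emph{every} configuration admits a positive-dimensional family of bad subspaces --- not that a generic one admits none. A Sard/transversality argument therefore cannot show the bad configurations are nowhere dense, and route (ii) would have to prove non-surjectivity of a projection against a favorable dimension count, which you do not attempt and which may well be impossible. (This is no accident: the same count is what makes \cref{thm:spheres-in-mass-assignments} true for $d+1$ masses.)

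The paper sidesteps cosphericality entirely. It takes $p_1,\dots,p_{d+1}$ a regular simplex and $p_0$ its barycenter, uses balls of a \emph{fixed} radius $r$ with $B_r^d(p_0)\subset\Conv(p_1,\dots,p_{d+1})$, and assigns to $L$ the projections of these balls. The key step is a convexity observation: a convex set (the closed ball bounded by the candidate sphere, or a half-space in the degenerate case) containing at least half of a centrally symmetric projected blob must contain its center. Bisecting $\mu_1^L,\dots,\mu_{d+1}^L$ then forces the ball to contain $\Conv(\pi_L(p_1),\dots,\pi_L(p_{d+1}))\supset\pi_L(B_r^d(p_0))$, hence all of $\mu_0^L$ rather than half. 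This argument is uniform in $L$ and $k$ and needs no genericity or compactness. To salvage your approach you would need to replace ``not cospherical'' by a containment condition of this type.
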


\begin{proof}
    
Let $p_0, \dots, p_{d+1}$ be points in $\rr^d$ such that $p_1, \dots, p_{d+1}$ form a regular simplex and $p_0$ is the barycenter of said simplex.  Let $r>0$ be small enough so that 
    $$B_r^d(p_0)\subset \Conv(p_1,\dots, p_{d+1}).$$
Then for any $k$-dimensional linear subspace $L$ of $\rr^d$ with projection $\pi_L: \R^d\to  L$, we have
    $$\pi_L(B_r^d(p_0)) \subset \pi_L(\Conv(p_1,\dots, p_{d+1})) = \Conv(\pi_L(p_1),\dots \pi_L(p_{d+1})).$$
For $i=0,\dots,d+1$, let $\nu_i$ be the uniform measure on the ball centered at $p_i$ with radius $r$.  For any $k$-dimensional subspace $L$ of $\rr^d$, let $\mu^L_i$ be the projection of $\nu_i$ onto $L$.

If a ($k$-dimensional) ball $B^k$ in $L$ contains at least half of $\mu^L_i$, then it must contain $\pi_L(p_i)$. If this were to happen for each $i=1,\dots, d+1$, then we would have
    $$B^k\supset \Conv(\pi_L(p_1),\dots \pi_L(p_{d+1})) \supset \pi_L(B_r^d(p_0)).$$
This would imply that $\mu_0^L (B^k) = \mu_0^L (L)$, which is much more than half of $\mu_0^L$. We conclude that no ball in $L$ can contain exactly half of each $\mu_i^L.$
\end{proof}

\begin{proof}[Proof of \cref{cor:lines}]
A family of lines in $\rr^3$ intuitively induces a mass assignment on the $2$-dimensional subspaces of $\rr^3$.  Given a plane $H \subset \rr^3$ and a Borel set $A \subset H$, define $\mu_H(A)$ the number of lines that intersect $A$.  We have the issue that lines can be parallel to a plane, so we have to be careful with this process.  We can avoid these issues using the same ideas as \cite{AxelrodFreed2022}.

    A finite family of lines is a discrete measure on $A_1(\rr^3)$, which is the affine Grassmanian of $1$-dimensional lines in $\rr^3$.  Note that the embedding $x \mapsto (x,1)$ from $\rr^3$ to $\rr^4$ shows that we can consider $A_1(\rr^3)$ as a subset of $G_2(\rr^4)$, the Grassmanian of $2$-dimensional linear subspaces of $\rr^4$.  We will say that a measure on $A_1(\rr^3)$ is smooth if its lift to $G_2(\rr^4)$ is a measure that is absolutely continuous with respect to the Haar measure on $G_2(\rr^4)$.

    Now, a smooth measure $\mu$ on $A_1(\rr^3)$ induces a mass assignment on the $2$-dimensional linear subspaces of $\rr^2$.  Given a $2$-dimensional subspace $H$ and a Borel set $A \subset H$, we define

    \[
    \mu_H(A) = \mu\left(\{\ell \in A_1(\rr^3): \ell \cap A \neq \emptyset \}\right).
    \]

    Since the set of lines that do not intersect $H$ has $\mu$-measure $0$, this is a mass assignment.  Therefore, given four smooth measures $\mu_1, \mu_2, \mu_3, \mu_4$ on $A_1(\rr^3)$ we can apply \cref{thm:spheres-in-mass-assignments} and obtain a vertical plane and a circle $\Gamma$ in that plane such that the $\mu_i$-measure of the set of lines that intersect the closed flat disc induced by $\Gamma$ is half of $\mu_i(A_1(\rr^3))$.  Finally, since $G_2(\rr^4)$ is a metric space, we can approximate the discrete measures induced by each family of lines by smooth measures.  The compactness of $G_2(\rr^4)$ shows that this approximation argument leads to the circle we want in $\rr^3$.

\end{proof}

\section{Mass assignments and parallel hyperplanes}

We expand on the ideas from the previous section to prove \cref{thm:goalposts}.  The case $k=d$ provides a new proof of the Sober\'on--Takahashi theorem for mass partitions with parallel hyperplanes \cite{Soberon2023}.

\begin{proof}
    We will use the set $S^d \times V_d(\rr^d)$ to parametrize our set of candidate partitions, regardless of the value of $k$.  We also want to find a map $f:S^d \times V_d(\rr^d) \to \rr^d \times \rr^{d-1}\times \dots \times \rr^0$ that checks if we have a correct partition.  Let $(w,v_1,\dots, v_d)$ be an element of $S^d \times V_d(\rr^d)$.  As in the previous section, our subspace $L$ will be the orthogonal complement of $v_1, \dots, v_{d-k}$.  However, it will be useful to have a basis of $L$, which is why we use the last $k$ elements of the $d$-frame: 
    \[
    L = \operatorname{span}\{v_{d-k+1}, \dots, v_d\} 
    \]
    The way we lift $L$ to $\rr^{d+1}$ is now given by the embedding
    \begin{align*}
        i: L & \to \rr^{d+1} \\
        \sum_{j=d-k+1}^d a_j v_j & \mapsto \left(\sum_{j=d-k+1}^{d} a_j (v_j,0)\right) + a_d^2 e_{d+1}.
    \end{align*}
    Intuitively, this map wraps the space $L$ upwards like a parabola preserving the direction $v_d$.

Given $d+1$ mass assignments on $k$-dimensional mass assignments, let $\nu_0^L, \dots, \nu_{d}^L$ be the $d+1$ measures on $\rr^{d+1}$ obtained by lifting the mass distributions assigned to $L$ via $i$.  Again, we pick a halfspace $H$ orthogonal to $w$ that bisects $\nu_0^{L}$, which can be chosen canonically halfway through all such possible hyperplanes.

It suffices to verify that $\langle w, (v_i,0)\rangle = 0$ for $i = 1,\dots , d-k$ so that $i(L) \not \subset H$, which means that $i(L)\cap H$ will be a set of co-dimension $1$ in $i(L)$.  Moreover, if we also verify that $\langle w, (v_i,0) \rangle = 0$ for $i=d-k+1, \dots, d-1$, then $H \cap i(L)$ projects back to $L$ as the union of two parallel hyperplanes, orthogonal to $v_d$.  This is because in the projections of the $2$-dimensional space orthogonal to $(v_i,0)$ for $i=1,\dots,d-1$, the image of $L'$ is a parabola and the image of $H$ is a line.  Since a line can intersect a parabola in at most two points, the translates of $\operatorname{span}\{(v_{d-k+1},0), \dots, (v_{d-1},0)\}$ through these two points will give us $H \cap L'$, which project back to $L$ onto two parallel hyperplanes.  We are ready to define $f(w,v_1,\dots, v_d)=(x_0,\dots, x_d)$ by
    \begin{align*}
x_0 & = \operatorname{dist}(w,U)\begin{bmatrix}
    \nu_1^L(H^+) - \nu_1^L(H^-) \\
    \vdots \\
    \nu_d^L(H^+) - \nu_d^L(H^-)
\end{bmatrix} \in \rr^d & \\
    x_i & = \begin{bmatrix}
        \langle w, (v_i,0)\rangle \\
        \langle e_1 , v_i \rangle \\
        \langle e_{2} , v_i \rangle \\
        \vdots \\
        \langle e_{k-1}, v_i \rangle \\
        0 \\
        \vdots \\
        0
    \end{bmatrix} \in \rr^{d-i} & \mbox{for }i=1,\dots,d-k \\
    x_i &= \begin{bmatrix}
        \langle w, (v_i,0)\rangle \\
        0 \\
        \vdots \\
        0
    \end{bmatrix} \in \rr^{d-i} & \mbox{for }i=d-k+1,\dots,d-1 \\
    x_d & = \begin{bmatrix}
        0
    \end{bmatrix} \in \rr^0
    \end{align*}

This map is continuous and equivariant, so it must have a zero by \cref{thm:weird-action-equivariant}.  A zero must satisfy $i(L) \not\subset H$.  This implies that $H$ bisects each of the measures $\nu_0^L, \dots, \nu_d^L$, and $i^{-1}(L'\cap H)$ will be the union of two parallel hyperplanes in $L$ such that the region between them contains exactly half of each of the $d+1$ mass distributions assigned to $L$.  This construction also ensures that $L$ contains each of the directions $e_1, \dots, e_{k-1}$.

    \end{proof}

    \cref{claim:optimality} also applies for \cref{thm:goalposts}.  This is because the only property of balls used in the proof of \cref{claim:optimality} is that they are convex.  Since the region between two parallel hyperplanes is also convex, then the same construction shows optimality of \cref{thm:goalposts}.

\section{Axis-parallel wedges}

Let us prove \cref{thm:axis-parallel-wedges-counterexample}.  To do this, we first introduce some useful constructions in $\rr^d$.

We say that an axis-parallel wedge $W$ in $\rr^2$ is a down-wedge if its vertical segment points down.  We consider vertical lines and horizontal lines as degenerate cases of down-wedges.

\begin{lemma}\label{lemma:unique-down-wedge}
    Let $\mu$ be a mass distribution on $\rr^2$ that has positive measure for every open set of $\rr^d$ and let $t$ be a real number.  Then there exists a unique down-wedge $W$ whose vertical ray is contained in the line $x=t$ that bisects $\mu_1$. 
\end{lemma}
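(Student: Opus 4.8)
The plan is to reduce the lemma to the study of two monotone one-parameter families of quarter-planes. A down-wedge whose vertical ray lies in the line $x=t$ is determined by the height $c\in\rr$ of its apex together with the direction of its horizontal ray, and its region is one of the quarter-planes $R_c=\{(x,y):x\ge t,\ y\le c\}$ (horizontal ray to the right) or $L_c=\{(x,y):x\le t,\ y\le c\}$ (horizontal ray to the left). The degenerate vertical line $x=t$ is the common limit of these families as $c\to+\infty$; for that line, ``bisecting $\mu$'' means the two closed half-planes it bounds have equal $\mu$-measure. (Horizontal lines have no vertical ray contained in $x=t$, so they do not enter the discussion.)

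First I would record the basic monotonicity and continuity facts. Put $\phi(c)=\mu(R_c)$, $\psi(c)=\mu(L_c)$, and $m=\mu(\{x\ge t\})$; since $\mu$ is absolutely continuous with respect to Lebesgue measure, the line $\{x=t\}$ carries no mass, so $1-m=\mu(\{x\le t\})$. Both $\phi$ and $\psi$ are continuous, by continuity of $\mu$ along monotone families of sets. Both are strictly increasing: for $c<c'$ the set $R_{c'}\setminus R_c$ contains the nonempty open set $\{x>t,\ c<y<c'\}$, which has positive $\mu$-measure by the hypothesis on $\mu$, and the same applies to $\psi$. Finally, from $\bigcap_c R_c=\emptyset$ and $\bigcup_c R_c=\{x\ge t\}$ we get $\lim_{c\to-\infty}\phi(c)=0$ and $\lim_{c\to+\infty}\phi(c)=m$, and likewise $\psi$ increases from $0$ to $1-m$.

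Then I would finish with a short case analysis on the position of $m$. If $m>1/2$, the intermediate value theorem gives some $c_0$ with $\phi(c_0)=1/2$, and strict monotonicity of $\phi$ makes $c_0$ unique; the right-pointing down-wedge $R_{c_0}$ bisects $\mu$. It is the only bisecting down-wedge along $x=t$: every left-pointing one has measure $\psi(c)<1-m<1/2$, and the degenerate vertical line does not bisect $\mu$ because $m\neq1/2$. The case $m<1/2$ is symmetric, giving a unique left-pointing $L_{c_0}$. If $m=1/2$, then $\phi(c)<1/2$ and $\psi(c)<1/2$ for every finite $c$, so no non-degenerate down-wedge bisects $\mu$, whereas the degenerate vertical line $x=t$ does, and it is the unique such wedge.

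The analytic content here is minimal; the point to handle carefully is the boundary/degenerate behaviour — verifying that as $c\to+\infty$ the families $R_c$ and $L_c$ exhaust exactly the two half-planes bounded by $x=t$, and that the borderline value $m=1/2$ is treated consistently with the convention that the vertical line is the single degenerate down-wedge along that line. I expect that bookkeeping, rather than any real difficulty, to be the crux.
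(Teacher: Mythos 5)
Your proof is correct and follows essentially the same route as the paper's: an intermediate value theorem plus strict monotonicity argument on the one-parameter family of quarter-planes whose horizontal ray points toward the heavier side of $x=t$, with the halving vertical line treated as the degenerate case. Your version is slightly more complete on uniqueness, since you explicitly rule out bisecting wedges pointing toward the lighter side, which the paper leaves implicit.
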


\begin{proof}
    If the line $x=t$ bisects $\mu$, this is the unique down-wedge that bisects $\mu$.  Otherwise, one of the two sides of this line has $\mu$-measure greater than $\mu(\rr^d)/2$.  Let $X_t(y)$ the down-wedge with vertex $(t,y)$ and the horizontal ray pointing to the side of the line $x=t$ with larger $\mu$-measure.  The function $t \mapsto \mu (X_t(y))$ continuous.  Note that $\lim_{y \to \infty} \mu X_t(y) > 1/2$ by construction (it is the measure of the half-plane defined by $x=t$ with larger $\mu$-measure), and $\lim_{t\to -\infty} = 0$.  Therefore, by the intermediate value theorem there must be a value of $y$ such that $\mu(X_t(y))=1/2$.  This value is unique since the function $t \mapsto \mu (X_t(y))$ is strictly increasing.
\end{proof}

We introduce specific notation for this down-wedge

\begin{definition}\label{def:down-wedge-name}
Given a mass distribution $\mu_d$ as above and $t\in\rr$, we denote by $A(\mu,t)$ the convex region of the unique down-wedge from \cref{lemma:unique-down-wedge} if $x=t$ is not the halving vertical line for $\mu$.  If $x=t$ is the halving line, we denote by $A(t)$ the left closed half-plane of $x=t$.  If $\ell$ is the halving horizontal line for $\mu$, we can also define $A(\mu,\infty)$ as the bottom half-plane of $\mu$.  This way, if $\nu$ is any other measure we have that $\lim_{t \to \infty} \nu(A(\mu,t)) = \nu (A(\mu,\infty))$.   We denote by $B(\mu,t)$ the other closed side of this downwedge.    
\end{definition}

\begin{figure}
    \centering
    \includegraphics[width=0.5\linewidth]{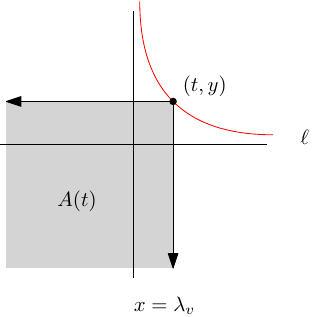}
    \caption{Given a measure $\mu$ with a vertical halving line at $x=\lambda_v$ and a horizontal halving line $\ell$, the unique down-wedge $A(t)$ with vertex satisfying $x=t$ is shown in the figure for some $t > \lambda_v$.  Notice that as $t\to \lambda_v$, the set $A(t)$ will approach the left side of $x=\lambda_v$, and as $t \to \infty$ the set $A(t)$ will approach the bottom half-plane defined by $\ell$. }
    \label{fig:down-wedge}
\end{figure}

An illustration of our construction is shown in \cref{fig:down-wedge}.  Now we are ready to prove \cref{thm:axis-parallel-wedges-counterexample}

\begin{proof}
    Assume we  have mass assignments $\mu_1,\dots, \mu_d$ on the vertical planes in $\rr^d$.  Assume that for each vertical plane $H$, the mass distribution assigned $\mu_d^H$ satisfies the conditions of \cref{lemma:unique-down-wedge}.  We address this additional assumption at the end of the proof.

    Let $S$ be the $(d-2)$-dimensional sphere orthogonal to $e_d$ in $\rr^d$.  We denote by $H_v$ the vertical plane spanned by $e_d$ and $v$, equipped with the ordered basis $(v,e_d)$.  For $v \in S$, let $\lambda_v$ be the value such that the line $x=\lambda_v$ in $H_v$ is the vertical halving line to $\mu^{H_v}_d$ in $H_v$.  For $t \in [\lambda_v,\infty]$, we define the sets $A^v(t), B^v(t)$ in $H_v$ from \cref{def:down-wedge-name} for $\mu^{H_v}_d$.

    Consider the map

    \begin{align*}
        f: S^{d-2}\times [0,1] & \to \rr^{d-1} \\
        (v,t) & \mapsto \begin{cases}
            \begin{bmatrix}
                \mu^{H_v}_1(A^v(\lambda_v -1 + 1/t))-\mu^{H_v}_1(B^v(\lambda_v -1 + 1/t)) \\
                \vdots \\
                \mu^{H_v}_{d-1}(A^v(\lambda_v -1 + 1/t))-\mu^{H_v}_{d-1}(B^v(\lambda_v -1 + 1/t))
            \end{bmatrix} & \mbox{ if }t > 0 \\
            \begin{bmatrix}
                \mu^{H_v}_1(A^v(\infty))-\mu^{H_v}_1(B^v(\infty)) \\
                \vdots \\
                \mu^{H_v}_{d-1}(A^v(\infty))-\mu^{H_v}_{d-1}(B^v(\infty))
            \end{bmatrix} & \mbox{ if } t=0
        \end{cases}
    \end{align*}

The sets $A(\lambda_v -1 +1/t)$ and $B(\lambda_v -1 +1/t)$ are two sides of a particular down-wedge that bisects $\mu_d^{H_v}$.  When $t=1$, they are the two half-plances defined by the vertical halving line of $\mu_d^{H_v}$.  When $t=\infty$, they are the two half-planes defined by the horizontal line of $\mu_d^{H_v}$.  If $f$ has a zero, then we have a vertical plane and a down-wedge bisecting all mass distributions on it.

Assume in search of a contradiction that the function $f$ has no zeros.  Then, we can consider the function

\begin{align*}
    g: S^{d-2}\times [0,1] & \to S^{d-2} \\
    (v,t) & \mapsto \frac{f(v,t)}{\|f(v,t)\|}
\end{align*}

For $t \in [0,1]$ consider the map $g_t: S^{d-2} \to S^{d-2}$ defined by $v \mapsto g(v,t)$.  The map $g$ provides a homotopy between $g_0$ and $g_1$.  Note that since $A^v(\lambda_v) = B^{-v}(\lambda_{-v})$ and $B^v(\lambda_v) = A^{-v}(\lambda_{-v})$ (as we only switch two sides of a vertical half-plane), we have that $g_1(-v) = - g_1(v)$, so $g_1$ must have odd degree.  However, since $A^v(\infty) = A^{-v}(\infty)$ and $B^v(\infty) = B^{-v}(\infty)$, as they are both defined as the bottom and top half-planes of the same horizontal line.  This means $g_0(v) = g_0(-v)$.  Therefore, $g_0$ must have even degree.  This contradicts the fact that the degree of a map is invariant under homotopies, so the conclusion of the theorem must hold.

The assumption on $\mu_d$ does not cause a loss of generality, as they can be addressed with the following standard compactness argument.  If $\mu_d$ does not satisfy those conditions, we can consider a mass assignment $\nu_d$ that does and take for some $\varepsilon > 0$ the assignment $\mu_{d,\varepsilon} = \frac{1}{1+\varepsilon}(\mu_d + \varepsilon \nu_d)$  In other words, for any vertical plane $H$ and any measurable set $A \subset H$ we have $\mu^H_{d,\varepsilon}(A) = \frac{1}{1+\varepsilon}(\mu^H_d(A) + \varepsilon \nu^H_d(A))$.  We apply the proof above and obtain the existence of a vertical plane $H_{\varepsilon}$ and a down-wedge $W_{\varepsilon}$ that bisects each of $\mu^H_{1}, \dots , \mu^H_{d-1}, \mu^H_{d,\varepsilon}$.  The set of vertical planes is compact, and the set of down-wedges that bisect a mass assignment is also compact if we consider degenerate cases (horizontal and vertical lines, the entire plane, and the empty set).  Therefore, by taking $\varepsilon \to 0$ we can construct a sequence of vertical planes and down-wedges that converge, which ultimately also bisects $\mu_d$.
\end{proof}

\begin{bibdiv}
\begin{biblist}

\bib{AxelrodFreed2022}{article}{
      author={Axelrod-Freed, Ilani},
      author={Sober\'on, Pablo},
       title={Bisections of mass assignments using flags of affine spaces},
        date={2024},
        ISSN={0179-5376,1432-0444},
     journal={Discrete Comput. Geom.},
      volume={72},
      number={2},
       pages={550\ndash 568},
         url={https://doi.org/10.1007/s00454-022-00465-x},
}

\bib{Barany1980}{article}{
      author={B\'ar\'any, Imre},
       title={Borsuk's theorem through complementary pivoting},
        date={1980},
        ISSN={0025-5610,1436-4646},
     journal={Math. Programming},
      volume={18},
      number={1},
       pages={84\ndash 88},
         url={https://doi.org/10.1007/BF01588299},
}

\bib{Blagojevic2023}{article}{
      author={Blagojevi\'c, Pavle V.~M.},
      author={Crabb, Michael~C.},
       title={Many partitions of mass assignments},
        date={2025},
        ISSN={1431-0635,1431-0643},
     journal={Doc. Math.},
      volume={30},
      number={1},
       pages={41\ndash 104},
         url={https://doi.org/10.4171/dm/980},
}

\bib{Blagojevic2023a}{article}{
      author={Blagojevi\'c, Pavle V.~M.},
      author={Calles~Loperena, Jaime},
      author={Crabb, Michael~C.},
      author={Dimitrijevi\'c{}~Blagojevi\'c, Aleksandra~S.},
       title={Topology of the {G}r\"unbaum-{H}adwiger-{R}amos problem for mass
  assignments},
        date={2023},
        ISSN={1230-3429},
     journal={Topol. Methods Nonlinear Anal.},
      volume={61},
      number={1},
       pages={107\ndash 133},
         url={https://doi.org/10.12775/tmna.2022.041},
}

\bib{Chan2020}{article}{
      author={Chan, Yu~Hin},
      author={Chen, Shujian},
      author={Frick, Florian},
      author={Hull, J.~Tristan},
       title={{Borsuk-Ulam theorems for products of spheres and Stiefel
  manifolds revisited}},
        date={2020},
     journal={Topological Methods in Nonlinear Analysis},
      volume={55},
      number={2},
       pages={553\ndash 564},
}

\bib{Camarena2024}{article}{
      author={Camarena, Omar~Antol{\'i}n},
      author={Loperena, Jaime~Calles},
       title={{A Center Transversal Theorem for mass assignments}},
        date={2024},
     journal={arXiv preprint arXiv:2401.15095},
}

\bib{Fadell:1988tm}{article}{
      author={Fadell, Edward},
      author={Husseini, Sufian},
       title={{An ideal-valued cohomological index theory with applications to
  Borsuk—Ulam and Bourgin—Yang theorems}},
        date={1988},
     journal={Ergodic Theory and Dynamical Systems},
      volume={8},
       pages={73\ndash 85},
}

\bib{Fradelizi2022}{article}{
      author={Fradelizi, Matthieu},
      author={Hubard, Alfredo},
      author={Meyer, Mathieu},
      author={Rold\'an-Pensado, Edgardo},
      author={Zvavitch, Artem},
       title={Equipartitions and {M}ahler volumes of symmetric convex bodies},
        date={2022},
        ISSN={0002-9327,1080-6377},
     journal={Amer. J. Math.},
      volume={144},
      number={5},
       pages={1201\ndash 1219},
         url={https://doi.org/10.1353/ajm.2022.0027},
}

\bib{guillemin2010differential}{book}{
      author={Guillemin, Victor},
      author={Pollack, Alan},
       title={Differential topology},
   publisher={American Mathematical Society},
        date={2010},
      volume={370},
}

\bib{Hubard2024}{article}{
      author={Hubard, Alfredo},
      author={Sober{\'o}n, Pablo},
       title={Bisecting masses with families of parallel hyperplanes},
        date={2024},
     journal={arXiv preprint arXiv:2404.14320},
}

\bib{Karasev:2016cn}{article}{
      author={Karasev, Roman~N.},
      author={Roldán-Pensado, Edgardo},
      author={Soberón, Pablo},
       title={{Measure partitions using hyperplanes with fixed directions}},
        date={2016},
     journal={Israel journal of mathematics},
      volume={212},
      number={2},
       pages={705\ndash 728},
}

\bib{Mus12}{article}{
      author={Musin, Oleg},
       title={{Borsuk--Ulam type theorems for manifolds}},
        date={2012},
     journal={Proceedings of the American Mathematical Society},
      volume={140},
      number={7},
       pages={2551\ndash 2560},
}

\bib{RoldanPensado2022}{article}{
      author={Rold\'{a}n-Pensado, Edgardo},
      author={Sober\'{o}n, Pablo},
       title={A survey of mass partitions},
        date={2022},
        ISSN={0273-0979},
     journal={Bull. Amer. Math. Soc. (N.S.)},
      volume={59},
      number={2},
       pages={227\ndash 267},
         url={https://doi.org/10.1090/bull/1725},
}

\bib{Schnider:2020kk}{article}{
      author={Schnider, Patrick},
       title={{Ham-Sandwich Cuts and Center Transversals in Subspaces}},
        date={2020},
     journal={Discrete \& Computational Geometry},
      volume={98},
      number={4},
       pages={623},
}

\bib{Sadovek2024}{article}{
      author={Sadovek, Nikola},
      author={Sober{\'o}n, Pablo},
       title={Bisections of mass assignments by parallel hyperplanes},
        date={2024},
     journal={arXiv preprint arXiv:2412.04058v2},
}

\bib{Soberon2023}{article}{
      author={Sober\'{o}n, Pablo},
      author={Takahashi, Yuki},
       title={Lifting methods in mass partition problems},
        date={2023},
        ISSN={1073-7928,1687-0247},
     journal={Int. Math. Res. Not. IMRN},
      number={16},
       pages={14103\ndash 14130},
         url={https://doi.org/10.1093/imrn/rnac224},
}

\bib{Stone:1942hu}{article}{
      author={Stone, A.~H.},
      author={Tukey, J.~W.},
       title={{Generalized ``sandwich'' theorems}},
        date={1942},
        ISSN={0012-7094},
     journal={Duke Mathematical Journal},
      volume={9},
      number={2},
       pages={356\ndash 359},
}

\bib{Steinhaus1938}{article}{
      author={Steinhaus, Hugo},
       title={A note on the ham sandwich theorem},
        date={1938},
     journal={Mathesis Polska},
      volume={9},
       pages={26\ndash 28},
}

\bib{thom1954quelques}{article}{
      author={Thom, Ren{\'e}},
       title={Quelques propri{\'e}t{\'e}s globales des vari{\'e}t{\'e}s
  diff{\'e}rentiables},
        date={1954},
     journal={Commentarii Mathematici Helvetici},
      volume={28},
      number={1},
       pages={17\ndash 86},
}

\bib{Uno:2009wk}{article}{
      author={Uno, Miyuki},
      author={Kawano, Tomoharu},
      author={Kano, Mikio},
       title={{Bisections of two sets of points in the plane lattice}},
        date={2009},
     journal={IEICE Transactions on Fundamentals of Electronics, Communications
  and Computer Sciences},
      volume={92},
      number={2},
       pages={502\ndash 507},
}

\bib{Zivaljevic2017}{incollection}{
      author={{\v{Z}}ivaljevi{\'c}, Rade~T.},
       title={Topological methods in discrete geometry},
        date={2017},
   booktitle={{Handbook of Discrete and Computational Geometry}},
     edition={Third},
   publisher={CRC Press},
       pages={551\ndash 580},
}

\end{biblist}
\end{bibdiv}

\end{document}